\newtheorem{Definition}{Definition}[section]
\crefname{hypothesis}{Hypothesis}{Hypotheses}
\title{Sub-optimality of Gauss--Hermite quadrature and optimality of the trapezoidal rule for\\ functions with finite smoothness\thanks{Submitted to the editors DATE.
\funding{This work of the second author was supported by NTNU project grant 81617985.}}}
\author{
    {Yoshihito Kazashi}
    \thanks{Corresponding author. Institute for Applied Mathematics,  Heidelberg University, Im Neuenheimer Feld 205, 69120 Heidelberg, Germany (\email{y.kazashi@uni-heidelberg.de}).}
\and 
    {Yuya Suzuki}
\thanks{Corresponding author. Department of Mathematical Sciences, Norwegian University of Science and Technology, Sentralbygg II,
Alfred Getz' vei 1,
Gl{\o}shaugen,
7034 Trondheim, Norway (\email{yuya.suzuki@ntnu.no}).}
\and
{Takashi Goda}
    \thanks{School of Engineering, University of Tokyo, 7-3-1 Hongo, Bunkyo-ku, Tokyo 113-8656, Japan (\email{goda@frcer.t.u-tokyo.ac.jp}).}
}
\newcommand*{\bbR}{\mathbb{R}}
\newcommand*{\rme}{\mathrm{e}}
\newcommand*{\rd}{\mathrm{d}}
\newcommand*{\calO}{\mathcal{O}}
\newcommand*{\cH}{\mathcal{H}}
\newcommand*{\Hscr}{\mathscr{H}}
\newcommand*{\R}{\bbR}
\newcommand{\wor}{\mathrm{wor}}
\newcommand{\GH}{\mathrm{GH}}
\newcommand{\NN}{\mathbb{N}}
\newcommand{\RR}{\mathbb{R}}
\newcommand{\ZZ}{\mathbb{Z}}
\newcommand{\dsone}{\mathds{1}}
\DeclareMathOperator{\erf}{erf}
\definecolor{darkred}{RGB}{220,20,60} 
\definecolor{darkblue}{RGB}{0,60,180} 
\definecolor{darkgreen}{RGB}{0,130,70}
\definecolor{darkorange}{RGB}{180,60,0}
\newcommand{\yk}[1]{{\color{darkblue}{#1}}} \usepackage[normalem]{ulem}
\newcommand{\ys}[1]{{\color{darkorange}{#1}}}
\newcommand*{\ykcst}[1]			  					            {\relax
        \ifmmode\text{\textcolor{darkblue}{\sout{\ensuremath{#1}}}}
    \else\textcolor{darkblue}{\sout{#1}}\fi}
\tikzstyle{diaanotestyle} = [
\renewcommand{\@todonotes@drawInlineNote}{{\begin{tikzpicture}[remember picture,baseline={(0,0)}]\draw node[diaanotestyle,font=\@todonotes@sizecommand,anchor=base west]{\begin{varwidth}[t]{7cm}
                \if@todonotes@authorgiven {\@todonotes@sizecommand \@todonotes@author:\,\@todonotes@text}\else {\@todonotes@sizecommand \@todonotes@text}\fi
                \end{varwidth}};\end{tikzpicture}}}\makeatother
\begin{document}

\maketitle

\begin{abstract}
The sub-optimality of Gauss--Hermite quadrature and the optimality of the trapezoidal rule 
are proved in the weighted Sobolev spaces of square integrable functions of order $\alpha$,
where the optimality is in the sense of worst-case error. 
For Gauss--Hermite quadrature, we obtain matching lower and upper bounds, which turn out to be merely of the order $n^{-\alpha/2}$ with $n$ function evaluations, although the optimal rate for the best possible linear quadrature is known to be $n^{-\alpha}$. 
Our proof of the lower bound exploits the structure of the Gauss--Hermite nodes; the bound is independent of the quadrature weights, and changing the Gauss--Hermite weights cannot improve the rate $n^{-\alpha/2}$.
In contrast, we show that a suitably truncated trapezoidal rule achieves the optimal rate up to a logarithmic factor. 
\end{abstract}  
\begin{keywords}
  Gauss--Hermite quadrature, trapezoidal rule, weighted Sobolev space, worst-case error
\end{keywords}

\begin{AMS}
65D30, 65D32, 65Y20, 33C45
\end{AMS}

\section{Introduction} \label{sec:intro}
This paper is concerned with a sub-optimality of Gauss--Hermite quadrature and an optimality of the trapezoidal rule.

Given a function $f\colon \mathbb{R}\to \mathbb{R}$, Gauss--Hermite quadrature is 
one of the standard numerical integration methods to compute the integral
\begin{equation}
    I(f):=\int_\mathbb{R}f(x)\frac{1}{\sqrt{2\pi}}\mathrm{e}^{-x^2/2}\mathrm{d}x.
    \label{eq:def_I(f)}
\end{equation}
It is a Gauss-type quadrature formula, i.e., the quadrature points are the zeros of the degree $n$ orthogonal polynomial associated with the weight function, and the corresponding quadrature weights are readily defined. With the weight function $\rho(x):=\frac{1}{\sqrt{2\pi}}\mathrm{e}^{-x^2/2}$, the orthogonal polynomial we have is the (so-called probabilist's) Hermite polynomial.

Gauss--Hermite quadrature is widely used; 
here, we just mention 
spectral methods \cite{Guo.BY_1999_Hermite,mao2017hermite,Canuto.C_etal_2006_book}, and applications in 
aerospace engineering \cite{braun2021satellite,BBM2020book}, 
finance \cite{Brandimarte2006book,FR2008book,GH2010}, and 
physics \cite{SS2016book,Gezerlis2020book}.
Nevertheless, the limitation of this method seems to be less known.

We start with numerical results that illustrate this deficiency. 
Figure~\ref{fig:GH-sub} shows a comparison of the Gauss--Hermite rule and a suitably truncated trapezoidal rule on $\mathbb{R}$.  
Here the target function in \eqref{eq:def_I(f)} is $f(x)=|x|^p$ with $p\in \{1,3,5\}$. 
For the trapezoidal rule, we integrate $f(x)\rho(x)$ with a suitable cut-off of the domain. 
We discuss the setting of this experiment in more detail at the end of Section~\ref{sec:trapez}.
\begin{figure}[H]
\centering
 \includegraphics[scale=1.05]{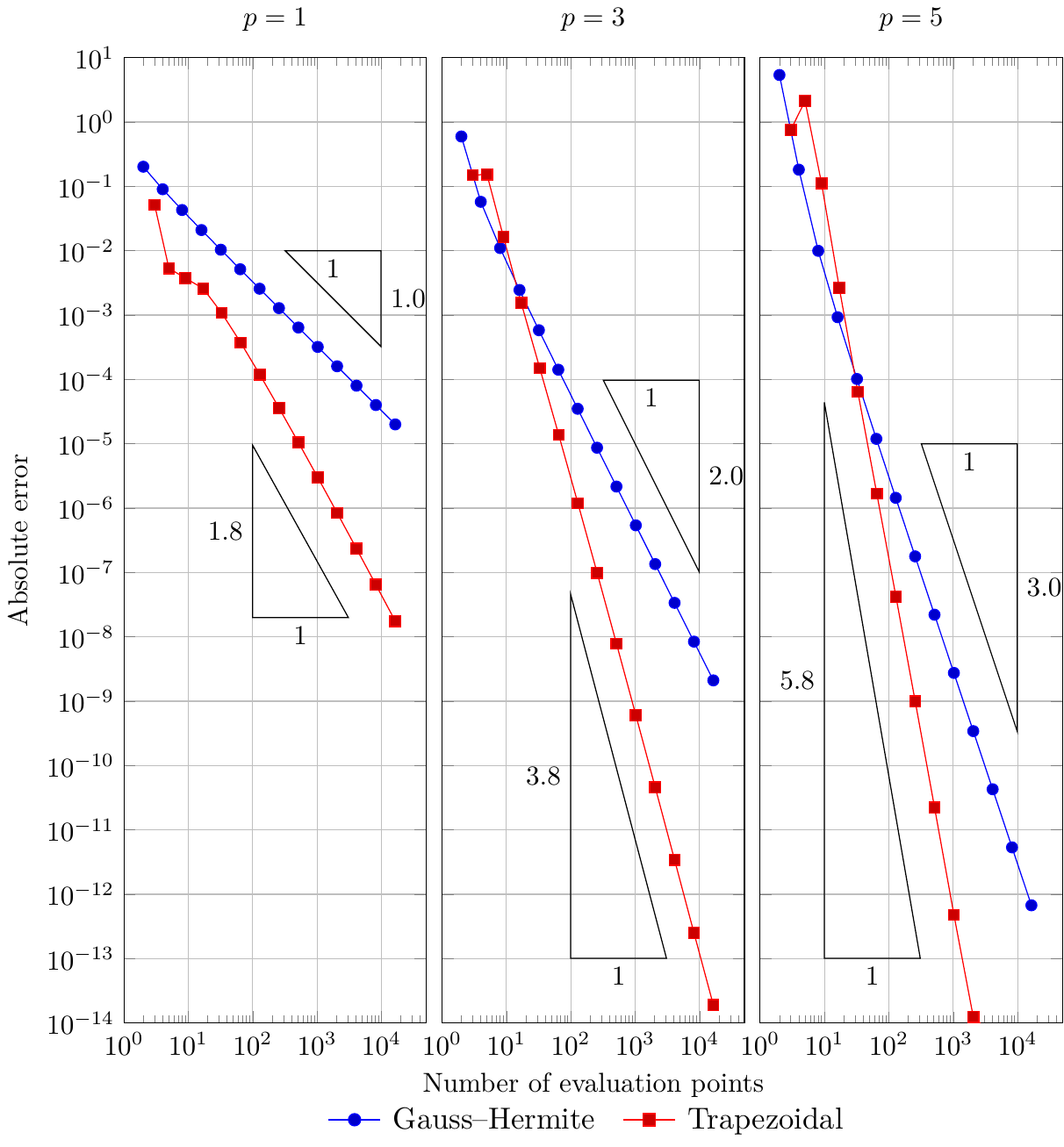}
 \vspace*{-7mm}
\caption{Absolute integration errors for $f(x)=|x|^p$ with $p=1$ \textup{(}left{)}, $p=3$ \textup{(}centre{)}, and $p=5$ \textup{(}right{)}.
Gauss--Hermite rule exhibits a slower error decay than the trapezoidal rule does.}
 \label{fig:GH-sub}
\end{figure}

What we observe in Figure~\ref{fig:GH-sub} is that while the trapezoidal rule achieves around $\calO(n^{-p-0.8})$, Gauss--Hermite quadrature achieves only a slower convergence rate almost $\calO(n^{-p/2-0.5})$, where $n$ is the number of quadrature points. 
A similar empirical inefficiency of Gauss--Hermite quadrature is reported in a paper by one of the present authors and Nuyens~\cite{NS2021},  also in a recent paper by Trefethen~\cite{T2021}
argued that Gauss--Hermite quadrature converges more slowly than the truncated trapezoidal
rule as $n\to\infty$, because its quadrature points are unnecessarily spread out, and in effect not enough quadrature points are utilised.

In this paper, we 
\emph{prove} a sub-optimality of Gauss--Hermite quadrature. 
More precisely, we establish a sharp lower bound for the error decay of Gauss--Hermite quadrature in the sense of worst case error. Moreover, we show that a suitably truncated trapezoidal rule achieves the optimal rate of convergence, up to a logarithmic factor.

The integrands of our interest are functions with finite smoothness. 
In this regard, we will work under the assumption that the function $f$ lives in the weighted $L^2$-Sobolev space with smoothness $\alpha\in\mathbb{N}$.  
This function space is widely used; see for example the books \cite{B1998,Canuto.C_etal_2006_book,Shen.J_2011_book} and references therein. 
For $\alpha\in\mathbb{N}$, this space is equivalent to the so-called Hermite space of finite smoothness, which has been attracting attention in high-dimensional computations; see \cite{IG2015_Hermite,DILP2018,gnewuch2021countable} for its use in high-dimensional computations, and see \cite[1.5.4. Proposition]{B1998} together with Section~\ref{sec:space} below for the equivalence.

In this setting, we prove that the worst case integration error of Gauss--Hermite quadrature is bounded from below by  $n^{-\alpha/2}$ up to a constant. This rate matches the upper bound shown by Mastroianni and Monegato \cite{MM1994}, and thus cannot be improved. 
Moreover, this rate provides a rigorous verification of the numerical findings  \cite[Section 4]{DILP2018}, where the authors computed  approximate values of the worst-case error for Gauss--Hermite quadrature in the Hermite space of finite smoothness, and observed the rate $\calO(n^{-\alpha/2})$ for  $\alpha=1,2,$ and $3$.

In the proof, we exploit the structure of the Gauss--Hermit quadrature \textit{points}. The argument is independent of the quadrature weights, and thus tuning them does not change the result. 
The proof in particular indicates that, if the spacing of a node set decreases asymptotically no faster than $1/\sqrt{n}$, then the corresponding quadrature rule cannot achieve the worst-case error better than $O(n^{-\alpha/2})$; see the proof of Lemma~\ref{lem:lower_bound_general} together with  Theorem~\ref{thm:GH-LB}.

It turns out that this rate is merely half of the best possible: 
if we allow $n$ quadrature-points and weights to be arbitrary, then the best achievable using (linear) quadrature is of the rate $\calO(n^{-\alpha})$; see \cite[Theorem 1]{DILP2018} for a precise statement.
Dick et al.~\cite{DILP2018} also show that a class of numerical integration methods based on so-called (scaled) higher order digital nets achieve the optimal rate $\calO(n^{-\alpha})$ up to a logarithmic factor in the multi-dimensional setting, including the one-dimensional setting as a special case.

Our results on the trapezoidal rule show that the trapezoidal rule, a method arguably significantly simpler than one-dimensional higher order digital nets,
also achieves this optimal rate, up to a logarithmic factor, and thus is nearly \textit{twice} as fast as the error-decay rate of Gauss--Hermite rule. 
It is also worth mentioning that Gauss--Hermite quadrature requires a nontrivial algorithm to generate quadrature points, whereas for the trapezoidal rule we simply have equispaced points.

For analytic functions, the efficiency of the trapezoidal rule is well known;  
related studies date back at least to the paper by Goodwin in 1949 \cite{Goodwin1949}, and this accuracy is not only widely known, but also still actively studied by contemporary numerical analysts \cite{S1997,MS2001,GilEtAl2007,Waldvogel2011,TW2014,T2021}. 
Our results show that this efficiency extends to Sobolev class functions, where we do not have tools from complex analysis such as contour integrals. 
Our proof uses the strategy recently developed by one of the present authors and Nuyens~\cite{NS2021} for a class of quasi-Monte Carlo methods.

We now mention other error estimates for Gauss--Hermite quadrature in the literature.  
Based on results by Freud~\cite{Freud1972}, Smith, Sloan, and Opie~\cite{SSO1983} showed an upper bound $\calO(n^{-\alpha/2})$ for $\alpha$-times continuously differentiable functions whose $\alpha$-th derivative satisfies a suitable growth condition for $\alpha\in\mathbb{N}$. 
Since the weighted Sobolev space seems to be more frequently used, our focus is on this class. 
Della Vecchia and Mastroianni~\cite{DM2003} showed an upper bound $\calO(n^{-1/6})$ for $\rho$-integrable a.e.~differentiable functions whose derivative is also $\rho$-integrable. 
Moreover, their result implies a matching lower bound in the sense of worst-case error, and thus  in this sense their upper bound is sharp. 
It does not seem to be trivial to determine whether their bounds generalise, for example to the order $n^{-\alpha/6}$ or to $n^{-\alpha/2+1/3}$, with the $\alpha$-times differentiability of the integrand for $\alpha\ge2$.
In contrast, our results show that, by assuming the square $\rho$-integrability, the rate improves to $\calO(n^{-\alpha/2})$, for general $\alpha\in\mathbb{N}$.

For analytic functions, 
the rate $\exp(-C\sqrt{n})$ with a constant $C>0$ has been mentioned  in the literature for functions with a \textit{suitable decay} 
that are analytic in a strip region.   
Barrett~\cite{Barrett.W_1961} seems to be the first to have presented this rate; see also Davis and Rabinowitz \cite[Equation~(4.6.1.18)]{DR1984}. 
Note, however, that no explicit proof or statement is given in either of these references; in particular, the decay condition for which this rate holds is not given. 
We are not aware of any reference that gives an explicit statement with complete assumptions. 
On the other hand, for the trapezoidal rule, Sugihara~\cite{S1997} conducted an extensive research on the integration error for functions analytic in a strip region with various decay conditions. 
In particular, for functions decaying at the rate $\exp(-\tilde{C}|x|^\rho)$ ($\rho\geq 1$) on the real axis, under other suitable assumptions he established the rate $\exp(-{C}^*n^{\rho/(\rho+1)})$, with an explicit constant ${C}^*>0$; see \cite[Theorem 3.1]{S1997} for a precise statement.  
Hence, whatever the decay condition the Gauss--Hermite rule requires to give the rate $\exp(-C\sqrt{n})$ may be, for $\rho\geq 1$ the trapezoidal rule attains a faster rate for the function class considered in \cite{S1997}. 
Note that lower bounds for the integration error are also presented in \cite{S1997}, which shows that the trapezoidal rule is near-optimal in the function class considered there. 
Trefethen \cite{T2021} makes the comparison of the trapezoidal rule and Gauss--Hermite quadrature explicit. 
In \cite[Theorem 5.1]{T2021}, for the trapezoidal rule and various other quadrature rules he established the rate 
$\exp(-C{n}^{2/3})$ for a class of functions that are analytic in a strip region which decays at the rate $\exp(-x^2)$ on the real axis. 
He also presents a numerical result for integrating $\cos(x^3)\exp(-x^2)$ with the physicists' Gauss--Hermite rule, which supports the rate $\exp(-C\sqrt{n})$ for the Gauss--Hermite rule.

Before moving on, we mention that our results motivate further studies of Gauss--Hermite based algorithms in high-dimensional problems. 
Integration problems in high dimensions arise, for example in computing statistics of solutions of partial differential equations parametrised by random variables. 
In particular, integration with respect to the Gaussian measure has been attracting increasing attention; see for example \cite{Graham.I_eta_2015_Numerische,C2018,YK2019,HS2019ML,D2021,dung2022analyticity}. 
The measure being Gaussian, algorithms that use Gauss--Hermite quadrature as a building block have gained popularity \cite{C2018,D2021,dung2022analyticity}.  
The key to proving error estimates in this context is the regularity of the quantity of interest with respect to the parameter, and for elliptic and parabolic problems such smoothness, even an analytic regularity, has been shown  \cite{BNT2010SIREV,NT2009parabolic,dung2022analyticity}. 
In contrast, the solutions of parametric hyperbolic systems suffer from limited regularity under mild assumptions \cite{MNT2013_StochasticCollocationMethod,MNT2015_AnalysisComputationElasticWave}. 
Hence, our results, which show the sub-optimality of Gauss--Hermite rule for functions with finite smoothness, caution us and encourage further studies of the use of algorithms based on Gauss--Hermite rule for this class of problems.

Finally, we note that if we have the weight function 
$\mathrm{e}^{-x^2}$ instead of $\mathrm{e}^{-x^2/2}$,
the corresponding orthogonal polynomials are called physicist's Hermite polynomials. Our results for Gauss--Hermite quadrature can be obtained for these polynomials by simply rescaling our results by $x\mapsto \sqrt{2}x.$
Likewise, results for physicist's Hermite polynomials in the literature, e.g.~in \cite{Szego1975}, are used throughout this paper. 

The rest of this paper is organized as follows. In Section~\ref{sec:space} we introduce necessary definitions such as Hermite polynomials, the weighted Sobolev spaces, and the Hermite spaces. 
We also discuss the norm equivalence between the weighted Sobolev space and the Hermite space. 
In Section~\ref{sec:GH-bounds} the sub-optimality of Gauss--Hermite quadrature is shown. In particular, we obtain matching lower and upper bounds for the worst-case error.
In Section~\ref{sec:trapez} the optimality of the trapezoidal rule is shown.
Section~\ref{sec:conc} concludes this paper.

\section{Function spaces with finite smoothness} \label{sec:space}
Throughout this paper, we use the weighted space $L^2_\rho:=L^2_\rho(\mathbb{R})$, the normed space consisting of the equivalence classes of Lebesgue measurable functions $f\colon\mathbb{R}\to\mathbb{R}$ satisfying $\|f\|_{L^2_\rho}^2:=\int_{\mathbb{R}}|f(x)|^2\rho(x)\mathrm{d}x<\infty$, where the equivalence relation is given by $f\sim g$ if and only if $\|f-g\|_{L^2_\rho}=0$.
\subsection{Hermite polynomials}
For $k\in\mathbb{N}\cup\{0\}$, the $k$-th degree probabilist's Hermite polynomial is given by
\begin{align}\label{eq:Hermite-def}
    H_{k}(x)
	=\frac{(-1)^{k}}	{\sqrt{k!}}
    	\rme^{x^{2}/2}
        \frac{\rd^{k}}{\rd x^{k}}
        	\rme^{-x^{2}/2},\quad x\in\mathbb{R},
\end{align}
where they are normalised so that $\|H_k\|_{L^2_\rho}=1$ for all $k\in\mathbb{N}\cup\{0\}$. 
The polynomials $(H_k)_{k\geq 0}$ form a complete orthonormal system for $L^2_\rho$.

The following properties are used throughout the paper.
\begin{align}\label{eq:Hermite-deriv}
H_k'(x)=\sqrt{k}H_{k-1}(x),\; k\ge1;
\end{align}
\begin{align}\label{eq:Hermite-rho-deriv}\
\frac{\rd^{\tau}}{\rd x^{\tau}}\left(H_k(x)\rho(x)\right)
&=        	
\frac{(-1)^{k}}	{\sqrt{2\pi k!}}
    	\rme^{x^{2}/2}
        \frac{\rd^{k+\tau}}{\rd x^{k+\tau}}
        	\rme^{-x^{2}/2} \nonumber
        	\\
        	&=
        	(-1)^\tau\sqrt{\frac{(k+\tau)!}{k!}}H_{k+\tau}(x)\rho(x),\; k\ge0,\; \tau\ge0.
\end{align}
\subsection{Weighted Sobolev space}
The function space we consider is the Sobolev space of square integrable functions, the integrability condition of which is imposed by the Gaussian measure.
\begin{Definition}[Weighted Sobolev space]\label{def:Sobolev}
For $\alpha\in\NN$, the weighted Sobolev space $\Hscr_{\alpha}$(with the weight function $\rho$) is the class of all functions $f\in L^2_\rho$ such that $f$ has weak derivatives satisfying $f^{(\tau)}\in L^2_\rho$ for $\tau=1,\dots,\alpha$:
\begin{align*}
 \Hscr_{\alpha}:=
    \Biggl\{
    f \in L^2_\rho \;\bigg|\; \|f\|_\alpha := \biggl(\sum_{\tau=0}^\alpha \|f^{(\tau)}\|^2_{L^2_\rho}\biggr)^{1/2} < \infty
    \Biggr\}.
\end{align*}
\end{Definition}
Elements in $\Hscr_{\alpha}$ for $\alpha\in \NN$ are in the standard local Sobolev space $W^{1,2}_{\mathrm{loc}}(\mathbb{R})$, and thus admit a continuous representative. 
In what follows, we always take the continuous representative of $f\in \Hscr_{\alpha}$.

We recall another important class of functions, the so-called Hermite space.
For this space we follow the definition of \cite{DILP2018}.
\begin{Definition}[Hermite space with finite smoothness]\label{def:Hermite}
For $\alpha\in\NN$, the Hermite space with finite smoothness $\cH^{\mathrm{Hermite}}_{\alpha}$ is given by
\begin{align*}
 \cH^{\mathrm{Hermite}}_{\alpha}:=
    \Bigg\{
    f \in L^2_\rho \;\bigg|\; \|f\|_{\cH^{\mathrm{Hermite}}_{\alpha}} := 
        \biggl(
            \sum_{k=0}^\infty r_{\alpha}(k)^{-1}\bigl|\widehat{f}(k)\bigr|^2
        \biggr)^{1/2} < \infty
    \Bigg\},
\end{align*}
where $\widehat{f}(k)=(f,H_k)_{L^2_{\rho}}:=\int_\R f(x)H_k(x) \rho(x)\rd x$, and
\[
r_\alpha(k):=\begin{cases}1, &\mathrm{if }\; k=0,\\ \bigl(\sum^\alpha_{\tau=0}\beta_\tau(k)\bigr)^{-1}, &\mathrm{if }\;k\ge1, \end{cases} 
\;\mathrm{and}\;
\beta_\tau(k):=\begin{cases} \frac{k!}{(k-\tau)!}, &\mathrm{if }\;k\ge\tau,\\ 0, &\mathrm{otherwise.} \end{cases}
\]
\end{Definition}
It turns out $\mathscr{H}_{\alpha}=\cH^{\mathrm{Hermite}}_{\alpha}$, where the equality here means the norm equivalence. 
Hence, results established for the Hermite space $\cH^{\mathrm{Hermite}}_{\alpha}$ can be readily translated to $\mathscr{H}_{\alpha}$ up to a constant, which allows us to compare the results on higher order digital nets in \cite{DILP2018} with ours.

A proof of this equivalence of the norm is outlined in \cite[1.5.4.~Proposition]{B1998}. 
A more detailed proof of one direction of the equivalence,  $f\in\cH^{\mathrm{Hermite}}_{\alpha}$ implying $f\in\Hscr_{\alpha}$ with the same $\alpha\in\NN$, is given in \cite[Lemma~6]{DILP2018}. 
Here, for completeness we prove its converse.
\begin{lemma}
Let $f\in\Hscr_{\alpha}$ with $\alpha\in\NN$, then $f\in\cH^{\mathrm{Hermite}}_{\alpha}$ with the same smoothness parameter $\alpha$.
\end{lemma}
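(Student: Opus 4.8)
The plan is to compute the Hermite coefficients of each weak derivative $f^{(\tau)}$ directly in terms of those of $f$, and then to sum the resulting Parseval identities over $\tau$. Since $(H_k)_{k\ge0}$ is a complete orthonormal system for $L^2_\rho$ and $f^{(\tau)}\in L^2_\rho$ for $0\le\tau\le\alpha$ by assumption, Parseval gives $\|f^{(\tau)}\|_{L^2_\rho}^2=\sum_{k\ge0}|\widehat{f^{(\tau)}}(k)|^2$. The key claim is the coefficient relation $\widehat{f^{(\tau)}}(k)=\sqrt{(k+\tau)!/k!}\,\widehat{f}(k+\tau)$ for all $k,\tau\ge0$. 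Granting this, reindexing by $m=k+\tau$ and using $\beta_\tau(m)=m!/(m-\tau)!$ for $m\ge\tau$ and $\beta_\tau(m)=0$ otherwise yields $\|f^{(\tau)}\|_{L^2_\rho}^2=\sum_{m\ge0}\beta_\tau(m)|\widehat{f}(m)|^2$. Summing over $\tau=0,\dots,\alpha$ and interchanging the (nonnegative) sums gives $\|f\|_\alpha^2=\sum_{m\ge0}\bigl(\sum_{\tau=0}^\alpha\beta_\tau(m)\bigr)|\widehat{f}(m)|^2=\sum_{m\ge0}r_\alpha(m)^{-1}|\widehat{f}(m)|^2=\|f\|_{\cH^{\mathrm{Hermite}}_\alpha}^2$, so in fact the two norms coincide and $f\in\Hscr_\alpha$ forces $f\in\cH^{\mathrm{Hermite}}_\alpha$. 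The $m=0$ term is consistent because $\beta_0(0)=1$ and $\beta_\tau(0)=0$ for $\tau\ge1$, matching $r_\alpha(0)^{-1}=1$.

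To obtain the coefficient relation I would integrate by parts $\tau$ times. Writing $\widehat{f^{(\tau)}}(k)=\int_\mathbb{R} f^{(\tau)}(x)H_k(x)\rho(x)\,\mathrm{d}x$ and moving all derivatives onto the smooth, rapidly decaying factor $H_k\rho$, the identity \eqref{eq:Hermite-rho-deriv} gives $\frac{\mathrm{d}^\tau}{\mathrm{d}x^\tau}(H_k\rho)=(-1)^\tau\sqrt{(k+\tau)!/k!}\,H_{k+\tau}\rho$, and the two factors of $(-1)^\tau$ cancel, leaving exactly $\sqrt{(k+\tau)!/k!}\,\widehat{f}(k+\tau)$. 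It is cleanest to argue inductively, performing one integration by parts at a time on successive derivatives, so that at each step one needs only the single-derivative identity $(H_k\rho)'=-\sqrt{k+1}\,H_{k+1}\rho$ together with $f^{(j)},f^{(j+1)}\in L^2_\rho$.

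The main obstacle is justifying the integration by parts, i.e.\ showing that every boundary term vanishes at $\pm\infty$. At each step a boundary term has the form $g(x)P(x)\rho(x)$, where $g$ is one of the weak derivatives of $f$ (so $g\in L^2_\rho$ with $g'\in L^2_\rho$) and $P$ is a polynomial. I would show $\phi:=gP\rho\to0$ as $x\to\pm\infty$ as follows. Since elements of $\Hscr_\alpha$ are locally absolutely continuous, $\phi$ is absolutely continuous, and $\phi'=g'P\rho+g(P'-xP)\rho$. Cauchy--Schwarz in $L^2_\rho$ bounds both $\int_\mathbb{R}|\phi|\,\mathrm{d}x$ and $\int_\mathbb{R}|\phi'|\,\mathrm{d}x$ by products of $\|g\|_{L^2_\rho}$, $\|g'\|_{L^2_\rho}$ and quantities of the form $(\int_\mathbb{R}|Q|^2\rho\,\mathrm{d}x)^{1/2}<\infty$ for polynomials $Q$; hence $\phi,\phi'\in L^1(\mathbb{R})$. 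Integrability of $\phi'$ forces $\phi$ to have limits at $\pm\infty$, and integrability of $\phi$ forces those limits to be $0$. This removes all boundary terms and completes the justification. The remaining steps, namely Parseval, the reindexing, and the interchange of the finite $\tau$-sum with the $m$-sum (valid by nonnegativity), are routine.
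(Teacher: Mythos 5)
Your proposal is correct, and it rests on the same central identity as the paper's proof, namely the coefficient-shift relation $(f^{(\tau)},H_k)_{L^2_\rho}=\sqrt{(k+\tau)!/k!}\,(f,H_{k+\tau})_{L^2_\rho}$ obtained by integrating by parts against $H_k\rho$ and then applying Parseval. Where you genuinely differ is in the two technical steps surrounding that identity. First, the justification of the integration by parts: the paper never confronts boundary terms at all; it splits the Gaussian weight, setting $F=f\rho^{1/2+\varepsilon}$ and $G=H_k\rho^{1/2-\varepsilon}$, shows that $\langle F,\cdot\rangle$ and $\langle F',\cdot\rangle$ are continuous functionals on $L^2(\R)$, and passes to the limit along a $C^\infty_{\mathrm{c}}$ approximation of $G$ in $W^{1,2}(\R)$. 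You instead keep the boundary terms and kill them directly: with $\phi=gP\rho$ ($g=f^{(j)}$, $j\le\alpha-1$, $P$ a polynomial), Cauchy--Schwarz gives $\phi,\phi'\in L^1(\R)$, hence $\phi$ has limits at $\pm\infty$ which must vanish. This is more elementary (no density theorem is needed) and, combined with local absolute continuity of $f^{(j)}$ for $j\le\alpha-1$, it is a complete argument. Second, the concluding summation: the paper bounds $\prod_{j=1}^{\tau}(k+j)\geq (k+\tau)^{\tau}/\tau^{\tau}$ and then invokes the asymptotics $\lim_{k\to\infty}r_\tau(k)k^{\tau}=1$ to conclude membership up to a constant, whereas you keep the product exactly, reindex, and sum over $\tau$, arriving at the exact identity $\|f\|_\alpha=\|f\|_{\cH^{\mathrm{Hermite}}_\alpha}$ (your check of the $k=0$ term is the right one). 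That is a sharper conclusion: it upgrades the one-directional embedding to an isometry, with no unspecified constants and no asymptotic argument. The only imprecision worth fixing is the phrase ``elements of $\Hscr_\alpha$ are locally absolutely continuous'': what you actually need, and what holds, is that each weak derivative $f^{(j)}$ with $j\le\alpha-1$ admits a locally absolutely continuous representative, since these are exactly the functions appearing in your boundary terms.
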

\begin{proof}
We first prove the claim for $\alpha=1$. Assume $f\in\Hscr_{1}$. Let 
$F(x):=f(x)\; (\rho (x))^{1/2+\varepsilon}$ and $ G(x):=H_k(x)(\rho (x))^{1/2-\varepsilon}$ for $0<\varepsilon<1/2$.
We have
\[\int_{\R}F'(x) \phi(x) \rd x
=
-  \int_{\R}F(x) \phi'(x) \rd x,
\]
for any function $\phi$ in the space of compactly supported infinitely differentiable functions $C^\infty _{\mathrm{c}} (\R)$.
Since $G$ is in the standard Sobolev space $W^{1,2} (\R)$, 
there exists a sequence $\{\phi_N\}_{N\in\mathbb{N}}\subset C^\infty _{\mathrm{c}} (\R)$ that satisfies
\[
 \|G-\phi_N\|^2_{L^2(\R)} +  \|G'-\phi'_N\|^2_{L^2(\R)} = \|G-\phi_N\|^2_{W^{1,2}(\R)} \le 1/N.
\]
Then, letting $g\in L^2(\mathbb{R})$, the Cauchy--Schwarz inequality implies 
$\int_{\R} |F(x)g(x)| \rd x
     \le
     \|f\|_{L^2_\rho}\|g\|_{L^2(\R)}<\infty$, 
and for
$F'(x)=f'(x) (\rho (x))^{1/2+\varepsilon}-(\varepsilon+1/2)xf(x)(\rho (x))^{1/2+\varepsilon}$ we also have
\begin{align*}
     \int_{\R} |F'(x)g(x)| \rd x
     &\le
     \left(\int_{\R} | f'(x) |^2 \rme^{-x^2/2} \rd x \right)^{1/2}\;
     \left(\int_{\R}  |g(x)|  \rd x \right)^{1/2}
\\
     &\kern-1.5cm +
     \sup_{t\in\mathbb{R}}|(\varepsilon+1/2)^2t^2 \rme^{-\varepsilon t^2}|
     \left(\int_{\R} | f(x) |^2 \rme^{-x^2/2} \rd x \right)^{1/2}\;
     \left(\int_{\R}  |g(x)|^2 \rd x \right)^{1/2}
<\infty.
\end{align*}
Therefore both $\langle F,\cdot\rangle:=\int_{\R}F(x) \cdot \rd x$ and $\langle F',\cdot \rangle:=\int_{\R}F'(x) \cdot \rd x$ define a continuous functional on $L^2(\R)$. 
Hence, we have $\int_{\R}F'(x)G(x)\rd x=-\int_{\R}F(x)G'(x)\rd x$ and thus
\begin{align*}
 &\int_{\R} f'(x) H_k(x) \rho(x) -(\varepsilon+1/2)x H_k (x)f(x)\rho (x) \rd x
 =\int_{\R} F'(x)G(x) \rd x\\
 =&-\int_{\R} F(x)G'(x) \rd x=-
 \biggl(\!\int_{\R}\!f(x) \sqrt{k}H_{k-1}(x) \rho(x) -(-\varepsilon+1/2)x H_k (x)f(x)\rho (x) \rd x
 \biggr),
\end{align*}
which is equivalent to 
\begin{align}
  \int_{\R} f'(x)& H_k(x) \rho(x)=-\int_{\R} f(x) \sqrt{k}H_{k-1}(x) \rho(x)-xH_k (x)f(x)\rho (x) \rd x
  \label{eq:<f',H>-identity}
  \\
  &=
  -\int_{\R} f(x) (H_k (x)\rho (x))' \rd x=
  \int_{\R} f(x) \sqrt{k+1}H_{k+1} (x)\rho (x) \rd x
  ,\nonumber
\end{align}
where we used $(H_k (x)\rho (x))'=-\sqrt{k+1}H_{k+1} (x)\rho (x)$. Hence we obtain
\begin{align*}
 (f',H_k)_{L^2_\rho} ^2 =(k+1)\;(f,H_{k+1})_{L^2_\rho} ^2,
\end{align*}
and
\[
 \sum_{k=0} ^\infty (k+1)\;(f,H_{k+1})_{L^2_\rho} ^2=\sum_{k=0} ^\infty (f',H_k)_{L^2_\rho}^2=\|f'\|_{L^2_\rho} ^2 <\infty.
\]
This implies $f\in\cH^{\mathrm{Hermite}}_{1}$ since $r_1^{-1}(k)=k+1$.
For general $\tau=1,...,\alpha$, assuming $f\in\Hscr_\tau$ and by repeating the same argument as above, 
we have $(f^{(\tau)},H_{k})_{L^2_\rho}^2=(f,H_{k+\tau})_{L^2_\rho}^2\prod_{j=1}^{\tau}(k+j)$ for $k\geq0$ and thus
\begin{align*}
\|f^{(\tau)}\|_{L^2_\rho}^2
&=
\sum_{k=0}^{\infty}
    (f,H_{k+\tau})_{L^2_\rho}^2\prod_{j=1}^{\tau}(k+j)
\geq\frac{1}{\tau^{\tau}}\sum_{k=0}^{\infty}(f,H_{k+\tau})_{L_{\rho}^{2}}^{2}(k+\tau)^{\tau}\\
&
=\frac{1}{\tau^{\tau}}\sum_{k=\tau}^{\infty}k^{\tau}(f,H_{k})_{L_{\rho}^{2}}^{2}
.
\end{align*}
Hence, observing $\lim_{k\to\infty }r_\tau(k)\,k^{\tau}=1$ (see also \cite[p.~687]{DILP2018}), we conclude $f\in \cH^{\mathrm{Hermite}}_{\tau}$.
\end{proof} 

\section{Matching bounds for Gauss--Hermite quadrature}\label{sec:GH-bounds}
In this section, we prove the sub-optimality of Gauss--Hermite quadrature. 
We first introduce the following linear quadrature of general form
\begin{align}\label{eq:general_quadrature}
    Q_n(f)=\sum_{j=1}^{n}w_{j} f(\xi_{j})
\end{align}
with arbitrary $n$ distinct quadrature points on the real line
\[ -\infty<\xi_1<\xi_2<\cdots <\xi_n<\infty \]
and quadrature weights $w_1,\ldots,w_n\in \RR$.
Gauss--Hermite quadrature $Q_{n}^{\GH}$ is given by the points $(\xi_{j}^{\GH})_{j=1,\dots,n}$ being the roots of $H_n$ and the weights $(w_{j})_{j=1,\dots,n}$ being  
$
w_{j}={1}/{[H_{n}'(\xi_{j}^{\GH})]^{2}}
$, 
see for example \cite[Theorem 3.5]{Shen.J_2011_book}.

Given a quadrature rule $Q_n$, 
it is convenient to introduce the notation
\[
{e}^{\wor}(Q_{n}, \Hscr_{\alpha})
:=
\sup_{0\neq f\in \Hscr_{\alpha}}
\frac{|I(f)-Q_n(f)|}
    {\|f\|_{\alpha}}
.
\]
The quantity ${e}^{\wor}(Q_{n}, \Hscr_{\alpha})$ is commonly referred to as the \emph{worst-case error} of $Q_n$ in $\Hscr_{\alpha}$; see for example \cite{Dick.J_Kuo_Sloan_2013_ActaNumerica}.
Now we can state our aim of this section more precisely: we prove the matching lower and upper bounds on ${e}^{\wor}(Q_{n}^{\GH}, \Hscr_{\alpha})$ for Gauss--Hermite quadrature.
\subsection{Lower bound}
We first derive the following lower bound on ${e}^{\wor}(Q_{n}, \Hscr_{\alpha})$ for the general quadrature \eqref{eq:general_quadrature}.
\begin{lemma}\label{lem:lower_bound_general}
Let $\alpha\in\NN$.
For $n\geq 2$, let
\begin{align}\label{eq:assum_minimum_distance}
    \sigma := \begin{cases} \alpha & \text{if }\ \min_{j=1,\ldots,n-1}(\xi_{j+1}-\xi_{j})\leq 1.\\
    0 & \text{otherwise.}\end{cases}
\end{align}
Then, there exists a constant $c_{\alpha}>0$, which depends only on $\alpha$, such that the worst-case error ${e}^{\wor}(Q_n, \Hscr_{\alpha})$ of a general function-value based linear quadrature  \eqref{eq:general_quadrature} in the weighted Sobolev space $\mathscr{H}_{\alpha}$ is bounded below by
\begin{align*}
    &{e}^{\wor}(Q_n, \Hscr_{\alpha})\geq c_{\alpha}\min_{i=1,\dots,n-1}(\xi_{i+1}-\xi_{i})^{\sigma+1/2}
    \\&
    \qquad\quad\times \sum_{j=1}^{n-1}\mathrm{e}^{-\max(\xi_{j}^2,\xi_{j+1}^2)/2}
        \left(\sum_{k=1}^{n-1}\mathrm{e}^{-\dsone_{\geq 0}(\xi_{k}\xi_{k+1}) \min(\xi_{k}^2,\xi_{k+1}^2)/2}\right)^{-1/2},
\end{align*}
where $\dsone_{\geq 0}(x)$ is equal to 1 if $x\geq 0$ and 0 otherwise.
\end{lemma}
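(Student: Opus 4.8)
The plan is to use a fooling-function (bump-function) argument that produces a single test function $f\in\Hscr_\alpha$ which is invisible to the quadrature, i.e.\ $f(\xi_i)=0$ for every node, so that $Q_n(f)=0$ \emph{regardless of the weights} $w_1,\dots,w_n$; this is precisely what will make the bound independent of the quadrature weights. Writing $\delta:=\min_{i}(\xi_{i+1}-\xi_i)$, I would fix once and for all a nonnegative reference bump $\psi\in C^\infty_{\mathrm c}((0,1))$ with $\int_0^1\psi>0$, and on each gap $[\xi_j,\xi_{j+1}]$ place a rescaled copy of \emph{common} width $\delta$, namely $\psi_j(x):=\psi((x-\xi_j)/\delta)$, supported in $(\xi_j,\xi_j+\delta)\subseteq(\xi_j,\xi_{j+1})$. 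Setting $f:=\sum_{j=1}^{n-1}\psi_j$, the supports are pairwise disjoint and contain no node, so $f\in\Hscr_\alpha$, $f$ vanishes at every $\xi_i$, and $e^{\wor}(Q_n,\Hscr_\alpha)\ge |I(f)-Q_n(f)|/\|f\|_\alpha=I(f)/\|f\|_\alpha$.

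Next I would estimate the numerator and denominator separately, exploiting that the Gaussian weight $\rho$ is essentially constant across each short interval. For the integral, bounding $\rho$ below on $[\xi_j,\xi_{j+1}]$ by its minimum value $\frac{1}{\sqrt{2\pi}}\rme^{-\max(\xi_j^2,\xi_{j+1}^2)/2}$ and changing variables gives $\int_\R\psi_j\rho\ge \frac{c_\psi}{\sqrt{2\pi}}\,\delta\,\rme^{-\max(\xi_j^2,\xi_{j+1}^2)/2}$ with $c_\psi=\int_0^1\psi$; since all bumps are nonnegative, these add to yield $I(f)\ge \frac{c_\psi}{\sqrt{2\pi}}\,\delta\sum_{j}\rme^{-\max(\xi_j^2,\xi_{j+1}^2)/2}$. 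For the norm, disjoint supports give $\|f\|_\alpha^2=\sum_j\sum_{\tau=0}^\alpha\|\psi_j^{(\tau)}\|_{L^2_\rho}^2$; the chain rule contributes $\delta^{-2\tau}$, the change of variables a further $\delta$, and bounding $\rho$ above on the bump support (a subset of $[\xi_j,\xi_{j+1}]$) by its maximum $\frac{1}{\sqrt{2\pi}}\rme^{-\dsone_{\ge0}(\xi_j\xi_{j+1})\min(\xi_j^2,\xi_{j+1}^2)/2}$ controls the weight. Here the indicator is exactly what distinguishes a gap straddling the origin (maximum $\rho(0)$) from one lying on one side (maximum at the endpoint nearer $0$). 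A short case split on whether $\delta\le1$ or $\delta>1$ shows $\sum_{\tau=0}^\alpha\delta^{1-2\tau}\|\psi^{(\tau)}\|_{L^2([0,1])}^2\le C_\psi\,\delta^{1-2\sigma}$, with $\sigma$ as defined in \eqref{eq:assum_minimum_distance}, because $\delta\le1$ is precisely the condition $\min_i(\xi_{i+1}-\xi_i)\le1$.

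Combining the two estimates, the powers of $\delta$ assemble as $\delta/\delta^{1/2-\sigma}=\delta^{\sigma+1/2}$, the exponential sums appear exactly as the two factors in the claimed bound, and $c_\alpha$ is built from $c_\psi$ and $C_\psi$ and hence depends only on $\alpha$. I expect the only delicate part to be the bookkeeping of the Gaussian weight: verifying that the pointwise minimum and maximum of $\rme^{-x^2/2}$ over each interval reproduce precisely the exponents $\max(\xi_j^2,\xi_{j+1}^2)/2$ and $\dsone_{\ge0}(\xi_j\xi_{j+1})\min(\xi_j^2,\xi_{j+1}^2)/2$ (the latter including the straddling-origin case), and aligning the common bump width $\delta$ with the definition of $\sigma$ so that the single factor $\delta^{\sigma+1/2}$ emerges. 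The rest is routine scaling of a fixed bump together with the uniform-coefficient choice $f=\sum_j\psi_j$, which already suffices---without any Cauchy--Schwarz optimisation of coefficients---because the bumps are nonnegative and their integrals add constructively.
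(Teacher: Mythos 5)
Your proposal is correct and proves exactly the stated bound, and it follows the same fooling-function strategy as the paper's proof: a nonnegative bump placed in each gap $[\xi_j,\xi_{j+1}]$, vanishing at every node so that $Q_n(f)=0$ regardless of the weights, with $I(f)$ bounded below via $\min_{[\xi_j,\xi_{j+1}]}\rho = \frac{1}{\sqrt{2\pi}}\rme^{-\max(\xi_j^2,\xi_{j+1}^2)/2}$ and $\|f\|_\alpha$ bounded above via $\max_{[\xi_j,\xi_{j+1}]}\rho=\frac{1}{\sqrt{2\pi}}\rme^{-\dsone_{\geq 0}(\xi_j\xi_{j+1})\min(\xi_j^2,\xi_{j+1}^2)/2}$. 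The realisation of the bump, however, is genuinely different and worth recording. The paper uses the polynomial bump $\bigl(\frac{x-\xi_j}{\xi_{j+1}-\xi_j}\bigr)^{\alpha}\bigl(1-\frac{x-\xi_j}{\xi_{j+1}-\xi_j}\bigr)^{\alpha}$ stretched over the \emph{whole} gap, so each gap contributes a factor $(\xi_{j+1}-\xi_j)^{1-2\tau}$ to the $\tau$-th norm term, and the proof then replaces each such factor by $\min_i(\xi_{i+1}-\xi_i)^{1-2\tau}$; that replacement is valid only for $\tau\geq 1$, since for $\tau=0$ the exponent is $+1$ and the inequality reverses when the gaps are unequal (a repairable blemish in the paper's write-up, but a genuine one). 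Your choice --- one fixed profile $\psi\in C^{\infty}_{\mathrm{c}}((0,1))$ rescaled to the \emph{common} width $\delta=\min_i(\xi_{i+1}-\xi_i)$ and anchored at the left endpoint of each gap --- removes this issue: every gap contributes exactly $\delta^{1-2\tau}\|\psi^{(\tau)}\|^2_{L^2([0,1])}$ times the Gaussian maximum, and your case split ($\delta\le 1$ gives $\delta^{1-2\tau}\le\delta^{1-2\alpha}$, while $\delta>1$ gives $\delta^{1-2\tau}\le\delta$) matches the definition of $\sigma$ in \eqref{eq:assum_minimum_distance} uniformly in $\tau=0,\dots,\alpha$, including $\tau=0$. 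Moreover, smoothness of $\psi$ makes $f\in\Hscr_\alpha$ immediate, whereas the paper must verify that its piecewise polynomial glues to a $C^{\alpha-1}$ function with a weak $\alpha$-th derivative. What the paper's construction buys is an explicit bump of exactly the critical smoothness; what yours buys is cleaner scaling bookkeeping and a norm estimate that is valid term by term. Both yield the same final expression, with $c_\alpha$ depending only on $\alpha$ (and, harmlessly, on the fixed profile $\psi$ in your case).
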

\begin{proof}
The heart of the matter is to construct a function $0\neq h_{n}\in \Hscr_{\alpha}$ such that $h_{n}(\xi_{j})=0$ for all $1\leq j\leq n$, resulting in $Q_n(h_{n})=0$, and that  $\|h_n\|_{\alpha}$ is small but $I(h_n)$ is large.
Define a function $h\colon \RR\to \RR$ by

\[ h(x):=h_n(x)
:=\begin{cases} 
    \displaystyle
    \biggl(\frac{x-\xi_{j}}{\xi_{j+1}-\xi_{j}}\biggr)^{\!\!\alpha}  \biggl(1-\frac{x-\xi_{j}}{\xi_{j+1}-\xi_{j}}\biggr)^{\!\!\alpha} 
        &\kern-4mm  
        \begin{array}{l}
        \text{if there exists }j\!\in\! \{1,\ldots,n-1\}\\[-2.5pt]
        \text{such that }x\in [\xi_{j},\xi_{j+1}],
        \end{array}
        \\[10pt]
    0  &\kern-4mm\begin{array}{l}\text{otherwise.}\end{array}
\end{cases}
\]
Then, $h$ turns out to fulfill our purpose. This type of fooling function used to prove lower bounds for the worst-case error is called \emph{bump function} (of finite smoothness), in quasi-Monte Carlo theory; see, for instance, \cite[Section~2.7]{DHP2015}.

First we show $h\in \Hscr_{\alpha}$. It follows from
\[ \left(\frac{x-\xi_{j}}{\xi_{j+1}-\xi_{j}}\right)^{\alpha}\left(1-\frac{x-\xi_{j}}{\xi_{j+1}-\xi_{j}}\right)^{\alpha}=\sum_{\ell=0}^{\alpha}(-1)^{\ell}\binom{\alpha}{\ell}\left(\frac{x-\xi_{j}}{\xi_{j+1}-\xi_{j}}\right)^{\alpha+\ell}\]
that we have
\begin{align*}
    h^{(\tau)}(x)=\frac{1}{(\xi_{j+1}-\xi_{j})^{\tau}}\sum_{\ell=0}^{\alpha}(-1)^{\ell}\binom{\alpha}{\ell}\frac{(\alpha+\ell)!}{(\alpha+\ell-\tau)!}\left(\frac{x-\xi_{j}}{\xi_{j+1}-\xi_{j}}\right)^{\alpha+\ell-\tau}
\end{align*}
for $\tau=0,1,\ldots,\alpha$ and any $\xi_{j}<x<\xi_{j+1}$. 
As we have $h^{(\tau)}(\xi_{j})=h^{(\tau)}(\xi_{j+1})=0$ for $\tau=0,1,\ldots,\alpha-1$, the function $h$ is $(\alpha-1)$-times continuously differentiable. Moreover, $h^{(\alpha-1)}$ is continuous piecewise polynomial and thus weakly differentiable. Also, noting that $\dsone_{\geq 0}(\xi_{j}\xi_{j+1})=0$ only when $\xi_{j}<0<\xi_{j+1}$ and otherwise $\dsone_{\geq 0}(\xi_{j}\xi_{j+1})=1$, we have
\begin{align*}
    & \int_{\xi_{j}}^{\xi_{j+1}}| h^{(\tau)}(x)|^2\rho(x)\rd x \\
    & \leq \frac{\mathrm{e}^{-\dsone_{\geq 0}(\xi_{j}\xi_{j+1}) \min(\xi_{j}^2,\xi_{j+1}^2)/2}}{\sqrt{2\pi}}\int_{\xi_{j}}^{\xi_{j+1}}| h^{(\tau)}(x)|^2\rd x 
        \\
    & = \frac{\mathrm{e}^{-\dsone_{\geq 0}(\xi_{j}\xi_{j+1}) \min(\xi_{j}^2,\xi_{j+1}^2)/2}}{\sqrt{2\pi}(\xi_{j+1}-\xi_{j})^{2\tau}} \\
        &{
            \times \kern-3mm\sum_{\ell_1,\ell_2=0}^{\alpha}\kern-1.5mm(-1)^{\ell_1+\ell_2} \kern-0.5mm
            \binom{\alpha}{\ell_1}\binom{\alpha}{\ell_2}\frac{(\alpha+\ell_1)!}{(\alpha+\ell_1-\tau)!}
                \frac{(\alpha+\ell_2)!}{(\alpha+\ell_2-\tau)!}
                \int_{\xi_{j}}^{\xi_{j+1}}\kern-1mm\Bigl(\frac{x-\xi_{j}}{\xi_{j+1}-\xi_{j}}\Bigr)^{2(\alpha-\tau)+\ell_1+\ell_2}\kern-0.6mm\rd x} \\
    & = \frac{\mathrm{e}^{-\dsone_{\geq 0}(\xi_{j}\xi_{j+1}) \min(\xi_{j}^2,\xi_{j+1}^2)/2}}{\sqrt{2\pi}(\xi_{j+1}-\xi_{j})^{2\tau-1}}\\&
    \quad{\times \sum_{\ell_1,\ell_2=0}^{\alpha}
        \frac{(-1)^{\ell_1+\ell_2}}{2(\alpha-\tau)+\ell_1+\ell_2+1}\binom{\alpha}{\ell_1}\binom{\alpha}{\ell_2}\frac{(\alpha+\ell_1)!}{(\alpha+\ell_1-\tau)!}\frac{(\alpha+\ell_2)!}{(\alpha+\ell_2-\tau)!},}
\end{align*}
for $\tau=0,1,\ldots,\alpha$. The last sum over $\ell_1$ and $\ell_2$ does not depend on $j$. 
Denoting this sum by $S_{\alpha,\tau}$, we obtain
\begin{align*}
    \|h\|_{\alpha}^2 & = \sum_{\tau=0}^{\alpha}\int_{\RR}| h^{(\tau)}(x)|^2 \rho(x)\rd x = \sum_{\tau=0}^{\alpha}\sum_{j=1}^{n-1}\int_{\xi_{j}}^{\xi_{j+1}}| h^{(\tau)}(x)|^2\rho(x)\rd x \\
    & \leq \frac{1}{\sqrt{2\pi}}\sum_{\tau=0}^{\alpha}S_{\alpha,\tau}\sum_{j=1}^{n-1}\frac{\mathrm{e}^{-\dsone_{\geq 0}(\xi_{j}\xi_{j+1}) \min(\xi_{j}^2,\xi_{j+1}^2)/2}}{(\xi_{j+1}-\xi_{j})^{2\tau-1}}\\
    & \leq
        \frac{1}{\sqrt{2\pi}}
        \sum_{\tau=0}^{\alpha}
            \frac{S_{\alpha,\tau}}
            {\min_{1\le i\le n-1}(\xi_{i+1}-\xi_{i})^{2\tau-1}}\sum_{j=1}^{n-1}\mathrm{e}^{-\dsone_{\geq 0}(\xi_{j}\xi_{j+1}) \min(\xi_{j}^2,\xi_{j+1}^2)/2}\\
    & \leq
    \frac{1}{\sqrt{2\pi}\min_{1\le i\le n-1}(\xi_{i+1}-\xi_{i})^{2\sigma-1}}\sum_{\tau=0}^{\alpha}S_{\alpha,\tau}\sum_{j=1}^{n-1}\mathrm{e}^{-\dsone_{\geq 0}(\xi_{j}\xi_{j+1}) \min(\xi_{j}^2,\xi_{j+1}^2)/2}<\infty.
\end{align*}
This proves $h\in \Hscr_{\alpha}$.

By definition of $h$, we have $h(\xi_{j})=0$ for all $j=1,\ldots,n$, and thus
\[ Q_n(h)=0.\]
Moreover, we have
\begin{align*}
    I(h) & = \int_{\RR}h(x)\rho(x)\rd x = \sum_{j=1}^{n-1}\int_{\xi_{j}}^{\xi_{j+1}}h(x)\rho(x)\rd x \\
    & \geq \frac{1}{\sqrt{2\pi}}\sum_{j=1}^{n-1}\mathrm{e}^{-\max(\xi_{j}^2,\xi_{j+1}^2)/2}\int_{\xi_{j}}^{\xi_{j+1}}\left(\frac{x-\xi_{j}}{\xi_{j+1}-\xi_{j}}\right)^{\alpha}\left(1-\frac{x-\xi_{j}}{\xi_{j+1}-\xi_{j}}\right)^{\alpha}\rd x \\
    & = \frac{1}{\sqrt{2\pi}}\sum_{j=1}^{n-1}\mathrm{e}^{-\max(\xi_{j}^2,\xi_{j+1}^2)/2}(\xi_{j+1}-\xi_{j})\int_{0}^{1}x^{\alpha}\left(1-x\right)^{\alpha}\rd x \\
    & = \frac{(\alpha!)^2}{(2\alpha+1)!\sqrt{2\pi}}\sum_{j=1}^{n-1}\mathrm{e}^{-\max(\xi_{j}^2,\xi_{j+1}^2)/2}(\xi_{j+1}-\xi_{j}) \\
    & \geq \frac{(\alpha!)^2}{(2\alpha+1)!\sqrt{2\pi}}
    \min_{1\le i\le n-1}(\xi_{i+1}-\xi_{i})\sum_{j=1}^{n-1}\mathrm{e}^{-\max(\xi_{j}^2,\xi_{j+1}^2)/2}.
\end{align*}

Using the above results, we obtain
\begin{align*}
    {e}^{\wor}(Q_n,\Hscr_{\alpha}) & 
    \geq \frac{|I(h)-Q_n(h)|}{\|h\|_{\alpha}}\\
    & \geq \frac{(\alpha!)^2}{(2\alpha+1)!(2\pi)^{1/4}}\left(\sum_{\tau=0}^{\alpha}S_{\alpha,\tau}\right)^{-1/2} \min_{1\le i\le n-1}(\xi_{i+1}-\xi_{i})^{\sigma+1/2}\\
    &{\quad \times \sum_{j=1}^{n-1}\mathrm{e}^{-\max(\xi_{j}^2,\xi_{j+1}^2)/2}
    \left(\sum_{k=1}^{n-1}\mathrm{e}^{-\dsone_{\geq 0}(\xi_{k}\xi_{k+1}) \min(\xi_{k}^2,\xi_{k+1}^2)/2}\right)^{-1/2}.}
\end{align*}
Now the proof is complete.
\end{proof}
Using the general lower bound in Lemma~\ref{lem:lower_bound_general}, we obtain the following lower bound on the worst-case error for Gauss--Hermite quadrature.
\begin{theorem}\label{thm:GH-LB}
Let $\alpha\in\NN$.
For any $n\geq 2$, the worst-case error of the Gauss--Hermite quadrature in the weighted Sobolev space $\mathscr{H}_{\alpha}$ is bounded from below as
\[ {e}^{\wor}(Q_{n}^{\GH}, \Hscr_{\alpha})\geq C_{\alpha}n^{-\alpha/2}\]
with a constant $C_{\alpha}>0$ that depends on $\alpha$ but independent of $n$.
\end{theorem}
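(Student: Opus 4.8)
The plan is to feed the Gauss--Hermite nodes $(\xi_j^{\GH})_{j=1}^n$, the roots of $H_n$, into the general lower bound of Lemma~\ref{lem:lower_bound_general}, and then to estimate the three resulting factors using only structural properties of these roots; in particular, the quadrature weights never enter. Write $\delta_n := \min_{1\le i\le n-1}(\xi_{i+1}^{\GH}-\xi_i^{\GH})$ for the minimal gap. I would reduce the theorem to two facts about the roots of $H_n$: (i) a global gap lower bound $\delta_n \ge c_1/\sqrt n$, and (ii) a central-count lower bound, namely that at least $c_2\sqrt n$ of the roots lie in a fixed interval around the origin, say $[-1,1]$. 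Since $\delta_n\to 0$, for all $n$ larger than some $n_0(\alpha)$ we have $\delta_n\le 1$, so $\sigma=\alpha$ in \eqref{eq:assum_minimum_distance}; the finitely many remaining $n\in\{2,\dots,n_0\}$ are absorbed into the constant $C_\alpha$ at the end, using that $e^{\wor}(Q_n^{\GH},\Hscr_\alpha)$ is a strictly positive finite number for each such $n$.

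To obtain (i) and (ii) I would use Sturm's comparison theorem applied to the Hermite differential equation. The physicist's Hermite function $u(x)=H_n^{\mathrm{phys}}(x)\rme^{-x^2/2}$ solves $u''+(2n+1-x^2)u=0$ and shares its zeros with $H_n^{\mathrm{phys}}$; the probabilist's roots follow by the rescaling $x\mapsto\sqrt2\,x$ already used in the paper. All roots lie in $(-\sqrt{2n+1},\sqrt{2n+1})$, where $0<2n+1-x^2\le 2n+1$; comparing with the constant-coefficient equation $v''+(2n+1)v=0$ shows that consecutive roots are at least $\pi/\sqrt{2n+1}$ apart, which gives (i). For (ii), on the fixed interval $[-1,1]$ the potential obeys $2n+1-x^2\ge 2n$, so the reverse Sturm comparison forces consecutive roots there to be at most $\pi/\sqrt{2n}$ apart; walking outward from the smallest nonnegative root in steps of length $\le\pi/\sqrt{2n}$ shows that $[-1,1]$ contains at least $c_2\sqrt n$ roots. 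Equivalently, both bounds may be quoted from the classical asymptotics of Hermite zeros in Szeg\H{o}~\cite{Szego1975}.

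With (i) and (ii) in hand the three factors are estimated as follows. First, $\delta_n^{\sigma+1/2}=\delta_n^{\alpha+1/2}\ge c_1^{\alpha+1/2}n^{-\alpha/2-1/4}$. Second, for the numerator I restrict the sum $\sum_j \rme^{-\max(\xi_j^2,\xi_{j+1}^2)/2}$ to those gaps whose endpoints both lie in $[-1,1]$; there are at least $c_2\sqrt n-1$ of them by (ii), and each contributes at least $\rme^{-1/2}$, so the numerator is $\ge c_2'\sqrt n$. Third, for the denominator I bound $\sum_k \rme^{-\dsone_{\geq 0}(\xi_k\xi_{k+1})\min(\xi_k^2,\xi_{k+1}^2)/2}$ from above: at most two terms near the origin contribute at most $1$ each, while for same-sign consecutive roots the exponent involves the smaller-modulus endpoint, giving $\rme^{-\xi_k^2/2}$ or $\rme^{-\xi_{k+1}^2/2}$. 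Since $x\mapsto\rme^{-x^2/2}$ is decreasing on $[0,\infty)$ and every gap has width $\ge\delta_n$ by (i), each such term is at most $\delta_n^{-1}$ times the integral of $\rme^{-x^2/2}$ over an adjacent gap, and telescoping yields $\sum_k\le 2+2\delta_n^{-1}\int_0^\infty \rme^{-x^2/2}\,\rd x\le c_3\sqrt n$. Hence the denominator factor is $\ge c_3^{-1/2}n^{-1/4}$. Multiplying the three, $n^{-\alpha/2-1/4}\cdot n^{1/2}\cdot n^{-1/4}=n^{-\alpha/2}$, which is the claimed rate.

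The main obstacle is the sharp two-sided control of the root spacing in (i)--(ii): the lower gap bound must hold \emph{globally}, its extremal smallest gap sitting at the centre where the limiting semicircle density is largest, while the count in (ii) needs the matching upper gap bound on a fixed central window. Sturm's comparison delivers both cleanly, but one must verify that the roots genuinely populate $[-1,1]$ and handle the terms adjacent to the origin separately in the denominator. The remaining steps—the monotone Gaussian Riemann-sum comparison for the denominator and the finite-$n$ bookkeeping for $\sigma$—are routine.
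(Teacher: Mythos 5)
Your proposal is correct, and at the structural level it follows the same route as the paper: both apply Lemma~\ref{lem:lower_bound_general} to the Gauss--Hermite nodes and then establish exactly the three facts you isolate (minimal gap $\gtrsim n^{-1/2}$, numerator sum $\gtrsim \sqrt{n}$, denominator sum $\lesssim \sqrt{n}$), whose product gives $n^{-\alpha/2}$. The difference is in execution, and it is a genuine one. The paper quotes Szeg\H{o}'s explicit inequalities for the minimal gap and for the location of \emph{every} zero (equations \eqref{eq:minimum_distance}, \eqref{eq:node_distribution_odd}, \eqref{eq:node_distribution_even}), then estimates both sums by explicit integral comparisons ending in error-function constants; this yields a fully explicit $C_\alpha$ valid for all $n\geq 2$, with the cases $2\leq n<10$ handled by taking $\sigma=0$ in \eqref{eq:assum_minimum_distance} rather than by absorbing them into the constant. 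You instead derive the two spacing facts self-containedly from Sturm comparison on $u''+(2n+1-x^2)u=0$ (which indeed reproduces Szeg\H{o}'s gap bound $\pi/\sqrt{n+1/2}$ after the $x\mapsto\sqrt{2}x$ rescaling), bound the numerator by merely counting consecutive pairs in a fixed central window, and bound the denominator by a monotone Riemann-sum/telescoping comparison against $\int_0^\infty \rme^{-x^2/2}\,\rd x$; the finitely many small $n$ are absorbed using positivity of the worst-case error, which the lemma itself (with $\sigma=0$) supplies. Your version buys a cleaner, more conceptual argument that makes transparent that only $\Theta(\sqrt{n})$ nodes effectively contribute, at the price of non-explicit constants. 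Two bookkeeping points, both within the care you already flagged: for even $n$ the denominator needs \emph{three} origin-adjacent terms set aside (the sign-straddling gap plus the first same-sign gap on each side, since the left neighbour of the latter is only partially in the region where $\rme^{-x^2/2}$ is monotone), and your ``walk outward'' count is cleanest phrased as ``every subinterval of the central window of length $\pi/\sqrt{2n}$ contains a zero,'' which avoids having to locate a starting root near the origin.
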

\begin{proof}
Let $\xi_{j}^{\GH}$, $j=1,\dots,n$, be the roots of $H_n$. 
For any $n\geq 2$, it holds that
\begin{align}\label{eq:minimum_distance}
\frac{\pi}{\sqrt{n+1/2}}<\min_{j=1,\dots,n-1} (\xi_{j+1}^{\GH}-\xi_{j}^{\GH})\leq \frac{\sqrt{21/2}}{\sqrt{n+1/2}};
\end{align} 
see, for instance, \cite[Eq.~(6.31.22)]{Szego1975}. 
Thus, to invoke  Lemma~\ref{lem:lower_bound_general} we let
\[ \sigma := \begin{cases} \alpha & \text{for }\ n\geq 10,\\
    0 & \text{for }\ 2\leq n<10,\end{cases}
\]
so that the conditions in \eqref{eq:assum_minimum_distance} are satisfied.
Also, each node $\xi_{j}^{\GH}$ is bounded below and above as follows; see, for instance, \cite[Eq.~(6.31.19)]{Szego1975}:
for $n$ odd, we have $\xi_{(n+1)/2}^{\GH}=0$ with the positive zeros satisfying
\begin{align}\label{eq:node_distribution_odd}
    \frac{j\pi}{\sqrt{n+1/2}} <\xi_{(n+1)/2+j}^{\GH}<\frac{4j+3}{\sqrt{n+1/2}}
    \quad\text{for }\ j=1,\ldots,(n-1)/2,
\end{align}
and for $n$ even,
\begin{align}\label{eq:node_distribution_even}
 \frac{(j-1/2)\pi}{\sqrt{n+1/2}} <\xi_{n/2+j}^{\GH}<\frac{4j+1}{\sqrt{n+1/2}}
    \quad\text{for }\ j=1,\ldots,n/2,
\end{align}
with symmetricity $\xi_{j}^{\GH} = -\xi_{n+1-j}^{\GH}$, $1\leq j\leq n$ for $n\geq 2$ odd and even. 

Let $n$ be odd. Using the result in Lemma~\ref{lem:lower_bound_general}, equations~\eqref{eq:minimum_distance}, and \eqref{eq:node_distribution_odd}, together with the symmetricity of the Hermite zeros, we obtain
\begin{align*}
    & {e}^{\wor}(Q_{n}^{\GH}, \Hscr_{\alpha}) \\
    & \geq 2c_{\alpha}
        \left( \frac{\pi^2}{n+1/2}\right)^{\sigma/2+1/4}\,\sum_{j=1}^{(n-1)/2}\mathrm{e}^{-(\xi_{(n+1)/2+j}^{\GH})^2/2}
    \left(2\sum_{k=1}^{(n-1)/2}\mathrm{e}^{-(\xi_{(n+1)/2+k-1}^{\GH})^2/2}\right)^{-1/2}\\
& \geq 2c_{\alpha}
        \left( \frac{\pi^2}{n+1/2}\right)^{\sigma/2+1/4}\,\sum_{j=1}^{(n-1)/2}\kern-2mm\mathrm{e}^{-(4j+3)^2/(2n+1)}
        \left(2+2\kern-2.5mm\sum_{k=2}^{(n-1)/2}\kern-2mm\mathrm{e}^{-\pi^2(k-1)^2/(2n+1)}\right)^{-1/2}.
\end{align*}
The sum over $j$ is further bounded below by
\begin{align*}
    \sum_{j=1}^{(n-1)/2}\mathrm{e}^{-(4j+3)^2/(2n+1)} & \geq \int_1^{(n-1)/2+1}\mathrm{e}^{-(4x+3)^2/(2n+1)}\rd x\\
    & = \frac{\sqrt{n+1/2}}{4}\int_{7/\sqrt{n+1/2}}^{(2n+5)/\sqrt{n+1/2}}\mathrm{e}^{-x^2/2}\rd x\\
    & \geq \frac{\sqrt{n+1/2}}{4}\int_{\sqrt{14}}^{11\sqrt{2}/\sqrt{7}}\mathrm{e}^{-x^2/2}\rd x\\
    & = \frac{\sqrt{\pi(n+1/2)}}{4\sqrt{2}}\left( \erf(11/\sqrt{7})-\erf(\sqrt{7})\right),
\end{align*}
where $\erf$ denotes the error function, and the last inequality holds for any odd $n\geq 3$. 
The sum over $k$ is further bounded above by
\begin{align*}
    \sum_{k=2}^{(n-1)/2}\mathrm{e}^{-\pi^2(k-1)^2/(2n+1)} & \leq \int_1^{(n-1)/2}\mathrm{e}^{-\pi^2(x-1)^2/(2n+1)}\rd x \\
    & \leq \sqrt{n+1/2} \int_{0}^{\infty}\mathrm{e}^{-\pi^2x^2/2}\rd x = \sqrt{\frac{n+1/2}{2\pi}}.
\end{align*}
Using these bounds, we have
\begin{align*}
    {e}^{\wor}(Q_{n}^{\GH}, \Hscr_{\alpha}) 
    & \geq 2c_{\alpha}
        \left( \frac{\pi^2}{n+1/2}\right)^{\sigma/2+1/4}\frac{\sqrt{\pi(n+1/2)}}{4\sqrt{2}}
        \left( \erf(11/\sqrt{7})-\erf(\sqrt{7})\right)\\
    & \quad \times \left(2+2\sqrt{\frac{n+1/2}{2\pi}}\right)^{-1/2}\\
    & \geq c_{\alpha}\pi^{\sigma+1/4} \frac{\erf(11/\sqrt{7})-\erf(\sqrt{7})}{2\sqrt{2}\sqrt{2+\sqrt{2}}} \frac{1}{(n+1/2)^{\sigma/2}}\\
    & \geq c_{\alpha}\pi^{1/4} \frac{\erf(11/\sqrt{7})-\erf(\sqrt{7})}{2^{(\alpha+5)/2}} \frac{1}{n^{\alpha/2}}.
\end{align*}

Let $n$ be even. 
As in the odd case, but now using \eqref{eq:node_distribution_even} instead of \eqref{eq:node_distribution_odd}, we obtain
\begin{align*}
    & {e}^{\wor}(Q_{n}^{\GH}, \Hscr_{\alpha}) \\
    & \geq c_{\alpha}
        \left( \frac{\pi^2}{n+1/2}\right)^{\sigma/2+1/4}
        \\
        & \qquad\times
        \Biggl(\mathrm{e}^{-(\xi_{n/2+1}^{\GH}})^2/2+2\sum_{j=2}^{n/2}\mathrm{e}^{-(\xi_{n/2+j}^{\GH})^2/2}\Biggr)
        \Biggl(1+2\sum_{k=2}^{n/2}\mathrm{e}^{-(\xi_{n/2+k-1}^{\GH})^2/2}\Biggr)^{-1/2}\\
    & \geq c_{\alpha}\left( \frac{\pi^2}{n+1/2}\right)^{\sigma/2+1/4}\\
    & \qquad\times
    \Biggl(\mathrm{e}^{-5^2/(2n+1)}+2\sum_{j=2}^{n/2}\mathrm{e}^{-(4j+1)^2/(2n+1)}\Biggr)
    \Biggl(1+2\sum_{k=2}^{n/2}\mathrm{e}^{-\pi^2(k-3/2)^2/(2n+1)}\Biggr)^{-1/2}.
\end{align*}
The sum over $j$ is equal to 0 for $n=2$ and is bounded below by
\begin{align*}
    \sum_{j=2}^{n/2}&\mathrm{e}^{-(4j+1)^2/(2n+1)}  \geq \int_2^{n/2+1}\mathrm{e}^{-(4x+1)^2/(2n+1)}\rd x\\
    & = \frac{\sqrt{n+1/2}}{4}\int_{9/\sqrt{n+1/2}}^{(2n+5)/\sqrt{n+1/2}}\mathrm{e}^{-x^2/2}\rd x\\
    & \geq \frac{\sqrt{n+1/2}}{4}\int_{3\sqrt{2}}^{13\sqrt{2}/3}\mathrm{e}^{-x^2/2}\rd x = \frac{\sqrt{\pi(n+1/2)}}{4\sqrt{2}}\left( \erf(13/3)-\erf(3)\right),
\end{align*}
for $n\geq 4$. 
Noting that we have $\mathrm{e}^{-5}\geq \sqrt{5\pi}\left( \erf(13/3)-\erf(3)\right)/4$, it holds that
\[ \mathrm{e}^{-5^2/(2n+1)}+2\sum_{j=2}^{n/2}\mathrm{e}^{-(4j+1)^2/(2n+1)} \geq \frac{\sqrt{\pi(n+1/2)}}{2\sqrt{2}}\left( \erf(13/3)-\erf(3)\right),\]
for any even $n$. 
The sum over $k$ is again equal to 0 for $n=2$ and is bounded above by
\begin{align*}
    \sum_{k=2}^{n/2}\mathrm{e}^{-\pi^2(k-3/2)^2/(2n+1)} & = \sum_{k=1}^{n/2-1}\mathrm{e}^{-\pi^2(k-1/2)^2/(2n+1)}  \leq \int_0^{ n/2-1}\mathrm{e}^{-\pi^2(x-1/2)^2/(2n+1)}\rd x \\
    & \leq \sqrt{n+1/2} \int_{-\infty}^{\infty}\mathrm{e}^{-\pi^2x^2/2}\rd x = \sqrt{\frac{2n+1}{\pi}}.
\end{align*}
It follows from these bounds on the sums that
\begin{align*}
    {e}^{\wor}(Q_{n}^{\GH}, \Hscr_{\alpha}) & \geq c_{\alpha}\left( \frac{\pi^2}{n+1/2}\right)^{\sigma/2+1/4}\frac{\sqrt{\pi(n+1/2)}}{2\sqrt{2}}\\
    &\qquad\times
    \bigl( \erf(13/3)-\erf(3)\bigr)
        \left(1+2\sqrt{\frac{2n+1}{\pi}}\right)^{-1/2} \\
    & \geq c_{\alpha}\pi^{\sigma+1/4} \frac{\erf(13/3)-\erf(3)}{2\sqrt{2}\sqrt{2\sqrt{2}+2}}\frac{1}{(n+1/2)^{\sigma/2}}\\
    & \geq c_{\alpha}\pi^{1/4} \frac{\erf(13/3)-\erf(3)}{2^{(\alpha+6)/2}}\frac{1}{n^{\alpha/2}}.
\end{align*}

Altogether, we obtain a lower bound for the worst-case error 
\[ {e}^{\wor}(Q_{n}^{\GH}, \Hscr_{\alpha})\geq C_{\alpha}n^{-\alpha/2}\]
with
\begin{align*}
    C_{\alpha}
    &=c_{\alpha}\pi^{1/4} \min\Biggl\{\frac{\erf(11/\sqrt{7})-\erf(\sqrt{7})}{2^{(\alpha+5)/2}}, \frac{\erf(13/3)-\erf(3)}{2^{(\alpha+6)/2}}\Biggr\}\\
    &=
    c_{\alpha}\pi^{1/4} \frac{\erf(13/3)-\erf(3)}{2^{(\alpha+6)/2}},
\end{align*}
which holds for all $n\geq 2$.
\end{proof}
The general lower bound in Lemma~\ref{lem:lower_bound_general} depends on the set of quadrature points but not on the set of weights. 
Because the lower bound for Gauss--Hermite quadrature in Theorem~\ref{thm:GH-LB} is built up on Lemma~\ref{lem:lower_bound_general}, the sub-optimality of Gauss--Hermite quadrature holds irrespective of the choice of the quadrature weights. 

The proof of Lemma~\ref{lem:lower_bound_general} in particular indicates that, if the spacing of a node set decreases asymptotically no faster than $1/\sqrt{n}$, then the corresponding quadrature rule cannot achieve the worst-case error better than $O(n^{-\alpha/2})$. 
To elaborate this point, we present the following less tight but more general result, which implies that any function-value based quadrature rule that does not have a quadrature point in $[0,n^{-1/2}]$, say, cannot have a worst-case error better than $O(n^{-\alpha/2-1/4})$.
\begin{corollary}\label{cor:delta-LB}
For $f\in\mathscr{H}_{\alpha}$ with $\alpha\in\mathbb{N}$, 
let $Q_n(f)$ be a quadrature of the form~\eqref{eq:general_quadrature}. 
Take a positive number 
$\delta=\delta(n)\in(0,1]$
such that no quadrature
point is in $(0,\delta)$. 
 Then, we have
\[
{e}^{\wor}(Q_n, \Hscr_{\alpha})\geq C_{\alpha}\delta^{\alpha+1/2},
\]
where the constant $C_{\alpha}>0$ is independent of $\delta$. In
particular, if ${Q}_{n}$ does not have any quadrature point in $(0,n^{-r})$, $r>0$, then we have ${e}^{\wor}(Q_n, \Hscr_{\alpha})\geq C_{\alpha}n^{-r\alpha-r/2}$.
\end{corollary}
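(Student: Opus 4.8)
The plan is to reuse the bump-function construction from the proof of Lemma~\ref{lem:lower_bound_general}, but with a \emph{single} bump supported on the gap $[0,\delta]$ instead of a chain of bumps spanning consecutive nodes. Concretely, I would take
\[
h(x) := \begin{cases}
\bigl(\tfrac{x}{\delta}\bigr)^{\alpha}\bigl(1-\tfrac{x}{\delta}\bigr)^{\alpha}, & x\in[0,\delta],\\
0, & \text{otherwise.}
\end{cases}
\]
Exactly as in Lemma~\ref{lem:lower_bound_general}, $h$ and its derivatives up to order $\alpha-1$ vanish at the endpoints $0$ and $\delta$, so $h$ is $(\alpha-1)$-times continuously differentiable with a weakly differentiable $(\alpha-1)$-st derivative, whence $h\in\Hscr_{\alpha}$. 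Since $h$ vanishes outside $[0,\delta]$ and at the two endpoints, while by hypothesis no quadrature point lies in the open interval $(0,\delta)$, we get $h(\xi_j)=0$ for every $j$ and therefore $Q_n(h)=0$.

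Next I would estimate $\|h\|_{\alpha}$ from above. Transcribing the per-interval computation from the proof of Lemma~\ref{lem:lower_bound_general} with $\xi_j=0$, $\xi_{j+1}=\delta$, and bounding $\rho\le(2\pi)^{-1/2}$ on $[0,\delta]$, gives
\[
\int_0^\delta | h^{(\tau)}(x)|^2\rho(x)\,\rd x \le \frac{S_{\alpha,\tau}}{\sqrt{2\pi}\,\delta^{2\tau-1}},\qquad \tau=0,1,\dots,\alpha,
\]
with the same $j$-independent nonnegative constants $S_{\alpha,\tau}$ appearing there. The crucial observation is that, because $\delta\in(0,1]$, we have $\delta^{-(2\tau-1)}\le \delta^{-(2\alpha-1)}$ for every $\tau\le\alpha$, so the term $\tau=\alpha$ dominates and
\[
\|h\|_{\alpha}^2 \le \frac{1}{\sqrt{2\pi}\,\delta^{2\alpha-1}}\sum_{\tau=0}^{\alpha}S_{\alpha,\tau}.
\]
For the integral I would bound $\rho\ge (2\pi)^{-1/2}\mathrm{e}^{-\delta^2/2}\ge (2\pi)^{-1/2}\mathrm{e}^{-1/2}$ on $[0,\delta]$ and substitute $u=x/\delta$ in the resulting Beta integral, exactly as in Lemma~\ref{lem:lower_bound_general}, to obtain
\[
I(h) \ge \frac{\mathrm{e}^{-1/2}}{\sqrt{2\pi}}\,\frac{(\alpha!)^2}{(2\alpha+1)!}\,\delta.
\]

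Finally, combining these through $e^{\wor}(Q_n,\Hscr_\alpha)\ge |I(h)-Q_n(h)|/\|h\|_\alpha = I(h)/\|h\|_\alpha$, the powers of $\delta$ collect as $\delta\cdot\delta^{(2\alpha-1)/2}=\delta^{\alpha+1/2}$, which yields the claim with
\[
C_\alpha = \frac{\mathrm{e}^{-1/2}(\alpha!)^2}{(2\pi)^{1/4}(2\alpha+1)!}\Bigl(\sum_{\tau=0}^{\alpha}S_{\alpha,\tau}\Bigr)^{-1/2},
\]
manifestly independent of $\delta$. The ``in particular'' statement then follows by setting $\delta=n^{-r}\in(0,1]$, so that $\delta^{\alpha+1/2}=n^{-r\alpha-r/2}$. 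I expect no serious obstacle: the only point demanding care is the bookkeeping that lets one use \emph{only} the single hypothesis $\delta\le1$ — it is precisely what makes the $\tau=\alpha$ term dominate the norm and what keeps $\rho$ bounded away from $0$ and $\infty$ on $[0,\delta]$ — while everything else is a direct specialisation of the already-established lemma.
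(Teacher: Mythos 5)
Your proof is correct and follows essentially the same route as the paper's own: the paper likewise takes the single bump $f_{\delta,\alpha}(x)=(x/\delta)^{\alpha}(1-x/\delta)^{\alpha}\mathds{1}_{[0,\delta]}(x)$ and invokes the calculations of Lemma~\ref{lem:lower_bound_general} to get $|I(f_{\delta,\alpha})|\geq C_{\alpha}\|f_{\delta,\alpha}\|_{\alpha}\,\delta^{\alpha+1/2}$. You have merely made explicit the bookkeeping the paper leaves implicit (the use of $\delta\le 1$ to let the $\tau=\alpha$ term dominate the norm and to bound $\rho$ below on $[0,\delta]$, and the resulting explicit constant), which is a faithful completion rather than a different argument.
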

\begin{proof}
Consider a function $f_{\delta,\alpha}$ defined by  $f_{\delta,\alpha}(x):=(x/\delta)^{\alpha}(1-x/\delta)^{\alpha}\mathds{1}_{[0,\delta]}(x)$,
$x\in\mathbb{R}$. Then, following the proof of Lemma~\ref{eq:general_quadrature},
analogous calculations show that we have  $|I(f_{\delta,\alpha})-Q_{n}(f_{\delta,\alpha})|=|I(f_{\delta,\alpha})|\geq C_\alpha\|f_{\delta,\alpha}\|_{\alpha}\delta^{\alpha+1/2}$ 
for a constant $C_\alpha>0$. 
This completes the proof.
\end{proof}

In passing, we note that the Theroem~\ref{thm:GH-LB} also gives a lower bound for the interpolation $L^1_{\rho}$-error. 
Given a function $f\colon\mathbb{R}\to\mathbb{R}$, let $\Lambda_{n}f$
be the polynomial interpolant defined by the zeros of $H_{n}$. 
Then,
like other interpolatory quadratures, Gauss--Hermite quadrature satisfies
$\frac{1}{\sqrt{2\pi}}\int_{\mathbb{R}}\Lambda_{n}f(x)\,\mathrm{e}^{-x^{2}/2}\rd x=\sum_{j=1}^{n}w_{j}f(x_{j})$.
Therefore, 
with 
$
 \|f-\Lambda_{n}f\|_{L_{\rho}^{1}(\mathbb{R})}
 :=\int_{\mathbb{R}}\big|f(x)-\Lambda_{n}f(x)\big|\,\rho(x)\mathrm{d}x$
denoting the interpolation $L^1_{\rho}$-error, we have
\begin{align*}
|I(f)-Q^{\mathrm{GH}}_n(f)|&=\Bigl|\int_{\mathbb{R}}\big[f(x)-\Lambda_{n}f(x)\big]\,\rho(x)\mathrm{d}x\Bigr|\leq
 \|f-\Lambda_{n}f\|_{L_{\rho}^{1}(\mathbb{R})},
\end{align*}
and thus $\sup_{0\not=f\in \mathscr{H}_{\alpha}}\frac{\|f-\Lambda_{n}f\|_{L_{\rho}^{1}(\mathbb{R})}}{\|f\|_\alpha}\geq C_\alpha n^{-\alpha/2}$.

\subsection{Upper bound}
In the previous section, we showed a lower bound for the worst-case error of the rate $n^{-\alpha/2}$. 
A matching upper bound has been shown by Mastroianni and Monegato, the result of whom we adapt to our setting. 
\begin{proposition}[\cite{MM1994}]
Let $\alpha\in\NN$ and $n\in\mathbb{N}$. For  $f\in\mathscr{H}_{\alpha}$, let $Q_{n}^{\GH}(f)$ be the Gauss--Hermite approximation to $I(f)$. Then, we have
\[
|I(f)-Q_{n}^{\GH}(f)|\leq C n^{-\alpha/2}\|f\|_{\alpha},
\]
where $C>0$ is a constant independent of $n$ and $f$.
\end{proposition}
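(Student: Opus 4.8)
The plan is to exploit the defining feature of Gauss--Hermite quadrature, namely that $Q_{n}^{\GH}$ integrates every polynomial of degree at most $2n-1$ exactly, and to convert the error into a weighted polynomial-approximation error. Writing $E_{n}:=I-Q_{n}^{\GH}$, for every $P\in\mathbb{P}_{2n-1}$ we have $E_{n}(f)=E_{n}(f-P)$, so that $|I(f)-Q_{n}^{\GH}(f)|\le |I(f-P)|+|Q_{n}^{\GH}(f-P)|$. First I would reduce \emph{both} pieces to the weighted sup-norm $\|(f-P)\rho^{1/2}\|_{L^{\infty}(\RR)}$. For the integral, since $\int_{\RR}\rho(x)^{1/2}\rd x<\infty$, I have $|I(f-P)|\le\int_{\RR}|(f-P)\rho^{1/2}|\,\rho^{1/2}\rd x\le C_{0}\,\|(f-P)\rho^{1/2}\|_{L^{\infty}(\RR)}$. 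For the quadrature, the Gauss--Hermite weights are positive with $\sum_{j=1}^{n}w_{j}=1$, whence
\[
 |Q_{n}^{\GH}(f-P)|\le\sum_{j=1}^{n}w_{j}\bigl|(f-P)(\xi_{j}^{\GH})\bigr|\le\Bigl(\sum_{j=1}^{n}w_{j}\,\rho(\xi_{j}^{\GH})^{-1/2}\Bigr)\,\|(f-P)\rho^{1/2}\|_{L^{\infty}(\RR)}.
\]

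The second step is a uniform stability estimate for the quadrature, namely $\sum_{j=1}^{n}w_{j}\,\rho(\xi_{j}^{\GH})^{-1/2}\le C$ with $C$ independent of $n$. Because $w_{j}=1/[H_{n}'(\xi_{j}^{\GH})]^{2}$ is the Christoffel number, this sum is precisely the Gauss--Hermite approximation $Q_{n}^{\GH}(\rho^{-1/2})$ to the finite integral $I(\rho^{-1/2})=\int_{\RR}\rho^{1/2}\rd x$, and I would establish the bound from the known asymptotics of the Hermite nodes and Christoffel numbers (e.g.\ \cite{Szego1975}): the super-exponential decay of $w_{j}$ for the outlying nodes exactly compensates the growth of $\rho^{-1/2}$.

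The analytic heart is a weighted Jackson (Favard) inequality for the Hermite weight, of the type $\inf_{P\in\mathbb{P}_{m}}\|(f-P)\rho^{1/2}\|\le C\,(a_{m}/m)^{\alpha}\,\|f^{(\alpha)}\rho^{1/2}\|$, where $a_{m}\sim\sqrt{2m}$ is the Mhaskar--Rakhmanov--Saff number. With $m\asymp n$ this produces the factor $(a_{m}/m)^{\alpha}\asymp n^{-\alpha/2}$, which is exactly the source of the half-order rate and the feature distinguishing the Hermite weight from the bounded-interval case. Choosing $P$ to be a near-best approximant and using $\|f^{(\alpha)}\|_{L^{2}_{\rho}}\le\|f\|_{\alpha}$, the three steps combine to give $|I(f)-Q_{n}^{\GH}(f)|\le Cn^{-\alpha/2}\|f\|_{\alpha}$. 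Since this is precisely the theorem of Mastroianni and Monegato \cite{MM1994}, in practice I would quote their weighted estimate, verify that membership in $\Hscr_{\alpha}$ implies their hypotheses, and transfer the bound through the norm equivalence of Section~\ref{sec:space}.

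The hard part will be matching the function-space norms. Our data lives in the $L^{2}_{\rho}$-Sobolev space $\Hscr_{\alpha}$, whereas the quadrature piece demands control of point evaluations at the widely spread-out nodes, i.e.\ a weighted $L^{\infty}$ bound; passing from $L^{2}_{\rho}$-regularity to the weighted $L^{\infty}$ approximation error at the full rate, without incurring the spurious factor $n^{1/4}$ that a crude weighted Nikolskii inequality would introduce, is the delicate point, and it is exactly what the weighted-approximation machinery of Freud underlying \cite{MM1994} is designed to handle. An equivalent $L^{2}$ reformulation makes the difficulty transparent: using the Hermite expansion together with the exactness $Q_{n}^{\GH}(H_{k})=\delta_{k,0}$ for $k\le 2n-1$, one finds $[{e}^{\wor}(Q_{n}^{\GH},\cH^{\mathrm{Hermite}}_{\alpha})]^{2}=\sum_{k\ge 2n}r_{\alpha}(k)\,|Q_{n}^{\GH}(H_{k})|^{2}$, so the whole estimate reduces to showing this weighted sum of squared aliasing coefficients $Q_{n}^{\GH}(H_{k})=\sum_{j}w_{j}H_{k}(\xi_{j}^{\GH})$ is $O(n^{-\alpha})$; the required sharp bounds hinge on the oscillatory cancellation in these node sums, which is the crux that the weighted-approximation route circumvents.
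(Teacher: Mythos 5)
Your closing plan (quote Mastroianni--Monegato and check that $\mathscr{H}_{\alpha}$-membership implies their hypotheses) is the same as the paper's, but you never carry out that check, and the self-contained argument you sketch in its place contains a gap that cannot be closed in the form you set it up. Steps 1--3 reduce both the integral and the quadrature sum to the weighted sup-norm $\|(f-P)\rho^{1/2}\|_{L^{\infty}(\RR)}$, and your Jackson--Favard inequality then bounds the best approximation by $\|f^{(\alpha)}\rho^{1/2}\|_{L^{\infty}(\RR)}$. But for $f\in\mathscr{H}_{\alpha}$ this last quantity is in general infinite: the space only gives $f^{(\alpha)}\rho^{1/2}\in L^{2}(\RR)$, and the two weighted norms are non-comparable (boundedness of $f^{(\alpha)}\rho^{1/2}$ does not even imply $f^{(\alpha)}\in L^{2}_{\rho}$, since $\int_{\RR}\rho\cdot\rho^{-1}\,\rd x=\infty$; and $L^{2}$ control never gives $L^{\infty}$ control). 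So your final assembly via ``$\|f^{(\alpha)}\|_{L^{2}_{\rho}}\le\|f\|_{\alpha}$'' is a non sequitur --- that inequality bounds the wrong norm --- and, as you yourself flag, the $L^{2}$-to-$L^{\infty}$ passage is exactly where the spurious loss would enter. As written, your route proves the rate only on a different, non-comparable function class, not on $\mathscr{H}_{\alpha}$.

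The conceptual error is the claim that this norm-matching ``is exactly what the weighted-approximation machinery of Freud underlying \cite{MM1994} is designed to handle.'' It is not: the hypothesis of \cite[Theorem 2]{MM1994} is a weighted $L^{1}$ condition on $f^{(\alpha)}$ (integrability of $f^{(\alpha)}$ against $\rme^{-(1-\varepsilon)x^{2}/2}$, together with local absolute continuity of $f^{(\tau)}$ for $\tau\le\alpha-1$), and the passage from the $L^{2}_{\rho}$ hypothesis of the Proposition to that condition must be supplied \emph{outside} their theorem. That passage is the entire content of the paper's proof, and it is elementary: $f\in\mathscr{H}_{\alpha}$ gives $f^{(\tau)}\in W^{1,2}_{\mathrm{loc}}(\RR)$, hence locally absolutely continuous representatives, and Cauchy--Schwarz gives, for any $0<\varepsilon<1/2$,
\[
\int_{\RR}|f^{(\alpha)}(x)|\,\rme^{\varepsilon x^{2}/2}\rme^{-x^{2}/2}\,\rd x
\le
\Bigl(\int_{\RR}|f^{(\alpha)}(x)|^{2}\rme^{-x^{2}/2}\,\rd x\Bigr)^{1/2}
\Bigl(\int_{\RR}\rme^{-(1/2-\varepsilon)x^{2}}\,\rd x\Bigr)^{1/2}
\le C\,\|f\|_{\alpha}.
\]
Because the hypothesis is $L^{1}$-type rather than $L^{\infty}$-type, it meshes with $L^{2}_{\rho}$ data at no loss, and no Nikolskii factor ever arises. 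Your aliasing identity $[{e}^{\wor}(Q_{n}^{\GH},\cH^{\mathrm{Hermite}}_{\alpha})]^{2}=\sum_{k\ge 2n}r_{\alpha}(k)|Q_{n}^{\GH}(H_{k})|^{2}$ is correct (for $\alpha\ge1$ the space is a reproducing kernel Hilbert space, so the computation is legitimate), but it is only a reformulation; neither it, nor the plausible Szeg\H{o}-based stability bound $\sum_{j}w_{j}\rho(\xi_{j}^{\GH})^{-1/2}\le C$, repairs the missing step.
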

\begin{proof}
First, $f\in\mathscr{H}_{\alpha}$ implies $f^{(\tau)}\in W^{1,2}_{\mathrm{loc}}(\mathbb{R})$ and thus $f^{(\tau)}$ admits a locally absolutely continuous representative for $\tau=0,\dots,\alpha-1$. Moreover, 
from $f\in\mathscr{H}_{\alpha}$, for any $0<\varepsilon<1/2$ we have
\[
\int_{\mathbb{R}}
	\mathrm{e}^{\varepsilon x^2/2}|f^{(\alpha)}(x)|\mathrm{e}^{-x^2/2}\mathrm{d}x
    \leq 
    \Bigl(
    	\int_{\mathbb{R}}
		|f^{(\alpha)}(x)|^2\mathrm{e}^{-x^2/2}
        \mathrm{d}x\Bigr)^{1/2}
    \Bigl(
    	\int_{\mathbb{R}}
    	\mathrm{e}^{-x^2(1/2-\varepsilon)}
     \mathrm{d}x\Bigr)^{1/2}<\infty.
\]
Thus, from \cite[Theorem 2]{MM1994}, the statement follows.
\end{proof} \section{Optimality of the trapezoidal rule} \label{sec:trapez}
In this section, we prove the optimality of trapezoidal rules in $\mathscr{H}_\alpha$.
More precisely, we consider the following quadrature with  $n$ equispaced points
\begin{align}
Q_{n,T}^{{*}}(g):=\frac{2T}{n}\sum_{j=0}^{n-1}g(\xi_j^*), \quad\text{with}\ \xi_j^*:=\frac{2T}{n}j-T,\ j=0,\dots,n-1.\label{eq:def-trap}
\end{align}
Here, $T>0$ is a parameter that controls the cut-off of the integration domain from $\mathbb{R}$ to $[-T,T]$. 

We call $Q_{n,T}^*$ a \textit{trapezoidal rule}:
indeed, for $n$ even, $Q_{n,T}^{*}(g)$ is nothing but the standard truncated trapezoidal rule for functions on $\mathbb{R}$ with mesh size $\Delta x$
\[
Q^*_{n,T}(g)=\Delta x \sum_{\ell=-n/2}^{n/2-1}g(\ell\, \Delta x),
\]
with $\Delta x=2T/n$, while for $n$ odd we have the trapezoidal rule only shifted by $-\frac12\Delta x$:
\[
Q^*_{n,T}(g)=\Delta x \sum_{\ell=-(n-1)/2}^{(n-1)/2}g\Bigl(\bigl(\ell-\frac12\bigr)\Delta x\Bigr).
\]

Our proof strategy is based on the approach by Nuyens and Suzuki \cite{NS2021}, where a multidimensional integration problem with respect to the Lebesgue measure using a quasi-Monte Carlo method called rank-$1$ lattice rule was considered. 
Following \cite{NS2021}, we consider the bound
\begin{align}
\left|
    \int_{\R} g(x) \rd x
    -
Q_{n,T}^*(g)
  \right|
  \le
  \biggl|\int_{\R} g(x) \rd x -\int_{-T}^T g(x) \rd x  \biggr| 
  +
  \biggl|\int_{-T}^T g(x) \rd x - Q_{n,T}^* (g) \biggr|.
  \label{eq:error-decomp-g}
\end{align}
We will let $g=f\rho$ with $f\in\Hscr_\alpha$ later in Theorem~\ref{thm:trape-opt}.

The first term of the right hand side in \eqref{eq:error-decomp-g} can be bounded as in \cite[Proposition~8]{NS2021}; 
we provide a proof adapted to our setting below in Proposition~\ref{prop:trape}. 
To bound the second term, we now derive what corresponds to \cite[Lemma~5 and Proposition~7]{NS2021}. 

\begin{lemma}
\label{lem:trap-error-inside}
Let $T>0$, $\alpha\in\mathbb{N}$, and $n\in\mathbb{N}$ be given.
Suppose that $g^{(\tau)}\colon\mathbb{R}\to\mathbb{R}$ is absolutely continuous on any compact interval for $\tau=0,\dots,\alpha-1$, 
and that $g^{(\alpha)}$ is in $L^2(\R)$.
Suppose further that $g$ satisfies
\begin{equation}
\|g\|_{\alpha,[-T,T]}
:=
\left( \left(\sum_{\tau=0}^{\alpha-1}  \left(\int_{-T}^T g^{(\tau)}(x) \rd x\right)^2 +\int_{-T}^T | g^{(\alpha)}(x) |^2\rd x \right)
\right)^{1/2}
< \infty
\end{equation}
and
\begin{align}
 \|g\|_{\alpha,\mathrm{decay}} 
 :=
 \sup_{\substack{x\in\R \\ \tau\in\{0,\ldots,\alpha-1\}}}
 \left| \rme^{(1-\varepsilon)x^2/2} \, g^{(\tau)}(x) \right|
 <
 \infty
 ,\quad \text{for some  $\varepsilon\in(0,1)$}.
 \label{eq:decay-cond}
\end{align}
Then the error of the $n$-point trapezoidal rule on the interval $[-T,T]$ defined in \eqref{eq:def-trap} is bounded by 
\begin{align}
\left|
    \int_{-T} ^T g(x) \rd x
    -
Q_{n,T}^*(g)
  \right| 
   &\le
   C_\alpha\|g\|_{\alpha,[-T,T]} T^{\alpha+1/2} \frac{1}{n^{\alpha}} \nonumber
   \\&\quad+
   \alpha \max\{1,(2T)^{\alpha-1}\}\|g\|_{\alpha,\mathrm{decay}}\rme^{-(1-\varepsilon)T^2/2}
  ,
\end{align}
with $C_\alpha :=2\sqrt{\zeta(2\alpha)}/\pi^\alpha$, where $\zeta(2\alpha):=\sum_{m=1}^{\infty}m^{-2\alpha}<\infty$.
\end{lemma}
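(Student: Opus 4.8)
The plan is to adapt the Fourier-analytic lattice-rule estimates of \cite[Lemma~5 and Proposition~7]{NS2021} to the present truncated, rapidly decaying setting. First I would rescale to the unit interval: writing $\tilde g(y):=g(2Ty-T)$ for $y\in[0,1]$, the substitution $x=2Ty-T$ shows that the left-hand side equals $2T$ times the error of the $n$-point equispaced rule on $[0,1]$ applied to $\tilde g$,
\begin{equation*}
\Bigl|\int_{-T}^T g(x)\,\rd x-Q_{n,T}^*(g)\Bigr|
=2T\,\Bigl|\int_0^1\tilde g(y)\,\rd y-\frac1n\sum_{j=0}^{n-1}\tilde g(j/n)\Bigr|.
\end{equation*}
The hypotheses transfer directly: $\tilde g^{(\tau)}$ is absolutely continuous for $\tau\le\alpha-1$, $\tilde g^{(\alpha)}\in L^2([0,1])$, and $\tilde g^{(\tau)}(y)=(2T)^\tau g^{(\tau)}(2Ty-T)$, so that $\|\tilde g^{(\alpha)}\|_{L^2([0,1])}=(2T)^{\alpha-1/2}\|g^{(\alpha)}\|_{L^2([-T,T])}$.

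Next I would expand $\tilde g$ in its Fourier series on $[0,1]$ and use the aliasing identity for the equispaced rule, which expresses the quadrature error as $\sum_{m\neq0}\widehat{\tilde g}(mn)$ up to a correction stemming from the fact that $\tilde g$ need not be periodic (at the endpoint the Fourier series converges to the average of the one-sided limits). Integrating by parts $\alpha$ times gives, for $k\neq0$,
\begin{equation*}
\widehat{\tilde g}(k)=-\sum_{s=1}^{\alpha}\frac{\tilde g^{(s-1)}(1)-\tilde g^{(s-1)}(0)}{(2\pi\rmi k)^s}
+\frac{1}{(2\pi\rmi k)^\alpha}\int_0^1\tilde g^{(\alpha)}(y)\,\rme^{-2\pi\rmi ky}\,\rd y,
\end{equation*}
which separates the error into boundary contributions and a top-order remainder. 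For the remainder, summing over the aliasing frequencies $mn$ and applying the Cauchy--Schwarz inequality in $m$ together with Bessel's inequality yields the factor $\bigl(\sum_{m\neq0}|m|^{-2\alpha}\bigr)^{1/2}=\sqrt{2\zeta(2\alpha)}$, the factor $(2\pi n)^{-\alpha}$, and $\|\tilde g^{(\alpha)}\|_{L^2([0,1])}$. After multiplying by $2T$ and inserting the scaling relation above, these combine to exactly $C_\alpha T^{\alpha+1/2}n^{-\alpha}\|g^{(\alpha)}\|_{L^2([-T,T])}$ with $C_\alpha=2\sqrt{\zeta(2\alpha)}/\pi^\alpha$, which is bounded by the first summand since $\|g^{(\alpha)}\|_{L^2([-T,T])}\le\|g\|_{\alpha,[-T,T]}$.

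It then remains to control the boundary contributions. Using $\tilde g^{(s-1)}(1)-\tilde g^{(s-1)}(0)=(2T)^{s-1}\bigl(g^{(s-1)}(T)-g^{(s-1)}(-T)\bigr)$, every such term involves only derivatives of order $0,\dots,\alpha-1$ evaluated at $\pm T$, so the decay hypothesis \eqref{eq:decay-cond} bounds each by $2\|g\|_{\alpha,\mathrm{decay}}\rme^{-(1-\varepsilon)T^2/2}$. The aliasing weights multiplying these terms carry factors $n^{-s}\le1$, so rather than contributing to the convergence rate in $n$ they collect---together with the non-periodicity correction---into a single exponentially small term; bounding the accumulated powers of $2T$ by $\max\{1,(2T)^{\alpha-1}\}$ and the number of orders by $\alpha$ produces the second summand $\alpha\max\{1,(2T)^{\alpha-1}\}\|g\|_{\alpha,\mathrm{decay}}\rme^{-(1-\varepsilon)T^2/2}$.

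The step I expect to be most delicate is the rigorous handling of the boundary and endpoint terms. The aliasing identity is clean only for periodic integrands, and for $\tilde g$ one must justify the endpoint correction and the interchange of summation and integration that separates $\widehat{\tilde g}(mn)$ into its boundary and remainder parts; an equivalent and perhaps more transparent route is the Euler--Maclaurin formula with periodic Bernoulli functions, which resums the conditionally convergent $s=1$ contribution automatically and reproduces the remainder $\propto n^{-\alpha}\int_0^1\tilde g^{(\alpha)}(y)\,\overline{B}_\alpha(ny)\,\rd y$. The conceptual crux is the same either way: the boundary terms decay in $n$ only like $n^{-s}$ with $s<\alpha$, which is slower than the target rate, so they must be shown negligible through the Gaussian decay of $g$ at $\pm T$ rather than through the number of points. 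It is the decay hypothesis, not the smoothness, that disposes of them, and this is precisely why they surface in the separate exponentially small term.
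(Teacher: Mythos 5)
Your proposal is correct in substance and rests on the same two ingredients as the paper's proof: the aliasing identity for the $n$-point equispaced rule combined with Cauchy--Schwarz and Bessel/Parseval (your computation of the top-order term, giving exactly $C_\alpha T^{\alpha+1/2}n^{-\alpha}\|g^{(\alpha)}\|_{L^2([-T,T])}$ with $C_\alpha=2\sqrt{\zeta(2\alpha)}/\pi^\alpha$, matches the paper's), and the Gaussian decay hypothesis to dispose of every term involving $g^{(\tau)}(\pm T)$ with $\tau<\alpha$. The genuine difference is organizational: you expand the non-periodic rescaled function directly and integrate by parts coefficient-wise (the Euler--Maclaurin route), whereas the paper periodizes \emph{first}, setting
\[
G:=g-\sum_{\tau=1}^{\alpha}\frac{B^{[-T,T]}_{\tau}}{\tau!}\int_{-T}^{T}g^{(\tau)}(s)\,\rd s,
\]
and then bounds $|\int_{-T}^T G-Q_{n,T}^*(G)|$ by aliasing and $|Q_{n,T}^*(g-G)|$ by decay. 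These are the same argument in disguise, since $B_s/s!$ is precisely the function whose $k$-th Fourier coefficient is $-1/(2\pi\rmi k)^s$ for $k\neq0$; subtracting the Bernoulli terms up front resums exactly the boundary contributions you must handle by hand. What the paper's ordering buys is that every delicate point you flag evaporates: $G$ is periodic with matching endpoint derivatives up to order $\alpha-1$, so its Fourier series converges pointwise at every node (no endpoint-average correction), and all series involved are absolutely convergent (no conditionally convergent $s=1$ sum, no interchange-of-summation issue to justify). What your ordering buys is extra decay in $n$ for the boundary part, since the aliasing weights $\sum_{m\neq0}(2\pi\rmi mn)^{-s}$ contribute factors $n^{-s}$. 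One bookkeeping caveat: carried out carefully, your boundary terms come with $(2T)^{s}n^{-s}$ for $s\le\alpha$ (the overall rescaling prefactor $2T$ times $(2T)^{s-1}$ from $\tilde g^{(s-1)}$), plus an endpoint correction of order $(2T/n)\,\|g\|_{\alpha,\mathrm{decay}}\rme^{-(1-\varepsilon)T^2/2}$, so you end up with $\max\{1,(2T)^{\alpha}\}$ rather than the stated $\max\{1,(2T)^{\alpha-1}\}$ (the paper's own estimate of $Q_{n,T}^*(g-G)$ appears to drop the factor $2T$ of the quadrature weight at the corresponding spot, so this discrepancy is shared). Your route therefore proves a variant of the lemma rather than its literal statement, but one that is equally sufficient for Proposition~\ref{prop:trape}, where $T\asymp\sqrt{\ln n}$ and this term is dominated in any case.
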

\begin{proof}
With a suitable auxiliary function $G=G^{[-T,T]}$ periodic on $[-T,T]$ satisfying $\int_{-T}^T  G(x) \rd x=\int_{-T}^T g(x) \rd x$, we consider the following bound:
\begin{align}
     \left|
    \int_{-T} ^T g(x) \rd x
    -
Q_{n,T}^*(g)
  \right|
  \le
       \left|
\int_{-T} ^T G(x) \rd x
    -
Q_{n,T}^*(G)
  \right|
  +
         \left|
Q_{n,T}^*(g-G) 
  \right|
  .\label{eq:[-T,T]-err-decomp}
\end{align}
Choosing
\[
G(x):=g(x)-\sum_{\tau=1}^{\alpha}\frac{B^{[-T,T]}_{\tau}(x)}{\tau!}\left(\int_{-T}^T g^{(\tau)}(s) \rd s\right)\text{ for }x\in [-T-\delta,T+\delta],
\]
for an arbitrarily fixed small $\delta\in (0,1)$ turns out to be convenient, as we now explain. Here, 
$B^{[-T,T]}_{\tau}(x)$ is the scaled Bernoulli polynomial of degree $\tau$ on $[-T,T]$, namely
\[
B^{[-T,T]}_{\tau}(x) = (2T)^{\tau-1} B_{\tau}\left(\frac{x+T}{2T}\right)
\]with $B_{\tau}$ being the standard Bernoulli polynomial of degree $\tau$. 
We have $\int_{-T}^T  G(x) \rd x=\int_{-T}^T g(x) \rd x$ by simply noticing that $\int_0^1 B_\tau(x)\rd x =0$ for $\tau\ge 1$.

The function $G$ is ($\alpha-1$)-times differentiable on 
$(-T-\delta,T+\delta)$ 
with $G^{(\alpha-1)}$ being absolutely continuous on $[-T,T]$. Moreover, we have
\begin{align*}
&\int_{-T}^{T} G^{(\tau)}(x)\rd x =\int_{-T}^{T} g^{(\tau)}(x)\rd x - \left(\int_{-T}^{T} B^{[-T,T]}_{0}(x) \rd x \right)\left(\int_{-T}^{T} g^{(\tau)}(s)\rd s\right)  =0 ,
\end{align*}
for $\tau=1,\ldots,\alpha$, and thus the fundamental theorem of calculus tells us
\begin{align*}
&G^ {(\tau)}(-T)=G^ {(\tau)}(T),\quad \text{for }\  \tau=0,\ldots,\alpha-1.
\end{align*}

These properties of $G$ imply the following two Fourier series representations. 
First, from the periodicity and the absolute continuity of $G$ on $[-T,T]$, 
we have the pointwise-convergent Fourier series expansion
\[
G(x)= \sum_{m\in\ZZ} \widehat{G}(m) \phi^{[-T,T]}_{m}(x), 
\]
where $\phi^{[-T,T]}_{m}(x):=\exp(\frac{2\pi \mathrm{i} m(x+T)}{2T})/\sqrt{2T}$,  $m\in\ZZ$ are the orthonormal Fourier basis on $L^2([-T,T])$ and 
$\widehat{G}(m):=\int_{-T}^T G(x) \exp(\frac{-2\pi \mathrm{i} m(x+T)}{2T})/\sqrt{2T}\, \rd x$, $m\in\ZZ$
 are the Fourier coefficients.
Second, 
from the square integrability of $G^{(\alpha)}$, we have the $L^2$-convergent Fourier series representation

\begin{align*}
    \left(G(x)\right)^{(\alpha)}
    &=\sum_{m\in\ZZ} \widehat{G^{(\alpha)}}(m) \phi^{[-T,T]}_{m}(x)=\sum_{m\in\ZZ} \left(\frac{2\pi \mathrm{i} m}{2T}\right)^{\alpha} \widehat{G}(m) \phi^{[-T,T]}_{m}(x),
\end{align*}
where in the second equality we repeatedly used the integration by parts.

Using these representations, we obtain
\begin{align}
   \biggl|
   Q_{n,T}^*(G)&
   -
    \int_{-T}^T G(x) \rd x
    \biggr|\notag
=
   \biggl|
    \frac{2T}{n} \sum_{j=0}^{n-1}\sum_{m\in\ZZ} \widehat{G}(m) \phi^{[-T,T]}_{m}(\xi^*_j)
    -
    \sqrt{2T}\widehat{G}(0)
\biggr|
\\
&=
 \Biggl|
 \sqrt{2T}\sum_{m\in\ZZ\setminus\{0\}} \widehat{G}(mn) 
 \Biggr|\notag
\\
 &\le
 \sqrt{2T} \Biggl(
 \sum_{m\in\ZZ\setminus\{0\}} |\widehat{G}(mn)|^2 \left(\frac{2\pi m n}{2T}\right)^{2\alpha} 
 \Biggr)^{1/2}
 \Biggl(
 \sum_{m\in\ZZ\setminus\{0\}}  \left(\frac{2T}{2\pi m n}\right)^{2\alpha} 
 \Biggr)^{1/2}\notag
 \\
 &\le
 \sqrt{2T}\|G\|_{\alpha,[-T,T]}
 \sqrt{2\zeta(2\alpha)}
 \left(\frac{T}{\pi n}\right)^{\alpha} 
 =:C_\alpha \|G\|_{\alpha,[-T,T]} T^{\alpha+1/2} \frac{1}{n^\alpha},\label{eq:bd-in-G}
\end{align}
where in the first to second lines we used the pointwise convergence of the series and\begin{align*}
    \frac{2T}{n} \sum_{j=0}^{n-1} \phi^{[-T,T]}_{m}(\xi^*_j)
=
\frac{\sqrt{2T}}{n} \sum_{j=0}^{n-1} \exp(2\pi\mathrm{i}\:\! m j/n)
=
\begin{cases}
\sqrt{2T} & \text{if  } m\equiv 0 \pmod{n},\\
0 & \text{otherwise},
\end{cases}
\end{align*}
while in the fourth line we used the Parseval identity.
The equation \eqref{eq:bd-in-G} is further bounded by $C_\alpha \|g\|_{\alpha,[-T,T]} T^{\alpha+1/2}\frac{1}{n^\alpha}$ since
\begin{align*}
\|G\|_{\alpha,[-T,T]}&=\left(  \left(\int_{-T}^T G(x) \rd x\right)^2 +\int_{-T}^T | \left(G\right)^{(\alpha)}(x) |^2\rd x
\right)^{1/2}
\\
&=
\left(  \left(\int_{-T}^T  g(x) \rd x\right)^2 +\int_{-T}^T  | g^{(\alpha)}(x)|^2 \rd x -\frac{1}{2T}\left(\int_{-T}^T  g^{(\alpha)}(y)\rd y \right)^2
\right)^{1/2}
\\
&\le
\|g\|_{\alpha,[-T,T]}.
\end{align*}
Now we bound the the second term of the right hand side in \eqref{eq:[-T,T]-err-decomp}:
\begin{align*}
    \left|
 Q_{n,T}^*(g-G) 
  \right|
  &=
  \Biggl|\frac{1}{n}\sum_{j=0}^{n-1}\sum_{\tau=1}^{\alpha}\frac{B^{[-T,T]}_{\tau}(\xi^*_j)}{\tau!}
    \biggl(\int_{-T}^T g^{(\tau)}(s) \rd s
    \biggr)\Biggr|
  \\
  &\le
  \sum_{\tau=1}^{\alpha}
    \Bigg|
        \frac{1}{n}\sum_{j=0}^{n-1}\frac{B^{[-T,T]}_{\tau}(\xi^*_j)}{\tau!}
    \Bigg|
    \bigl| g^{(\tau-1)}(T)-g^{(\tau-1)}(-T)\bigr|
  \\
  &\le
   \sum_{\tau=1}^{\alpha}\frac{(2T)^{\tau-1}}{2} (2 \|g\|_{\alpha,\mathrm{decay}})\,\rme^{-(1-\varepsilon)T^2/2}
   \\
   &\le
   \alpha \max\{1,(2T)^{\alpha-1}\}\|g\|_{\alpha,\mathrm{decay}}\,\rme^{-(1-\varepsilon)T^2/2},
\end{align*} 
where in the penultimate line we used $|\frac{B^{[-T,T]}_{\tau}(x)}{\tau!}|\le \frac{(2T)^{\tau-1}}{2}$ for $x\in[-T,T]$; see~\cite[Equation~(6)]{NS2021} or~ \cite{L1940}. 
Together with \eqref{eq:bd-in-G}, the statement follows.
\end{proof}

Now, what remains in the bound~\eqref{eq:error-decomp-g} is the error due to chopping the real line to the interval $[-T,T]$. 
The following result tells us how to choose $T$ to obtain a total error bounded by $\calO(n^{-\alpha})$ up to a logarithmic factor. 
\begin{proposition}
\label{prop:trape}
Let $\alpha\in\NN$. 
Suppose that the function $g^{(\tau)}\colon\mathbb{R}\to\mathbb{R}$ is absolutely continuous on any compact interval for $\tau=0,\dots,\alpha-1$, 
and that $g^{(\alpha)}$ is in $L^2(\R)$.
Suppose further that $g$ satisfies
\begin{align}
\|g\|_{\alpha}^*
:=
\sup_{\substack{I\subset\R\\|I|<\infty}} \|g\|_{\alpha,I}
:=
\sup_{\substack{I\subset\R\\|I|<\infty}}\left( \left(\sum_{\tau=0}^{\alpha-1}  \left(\int_I g^{(\tau)}(x) \rd x\right)^2 +\int_I | g^{(\alpha)}(x) |^2\rd x \right)
\right)^{1/2}
< \infty
\label{eq:def-alpha-star}
\end{align}
and
\begin{align}
 \|g\|_{\alpha,\mathrm{decay}} 
 :=
 \sup_{\substack{x\in\R \\ \tau\in\{0,\ldots,\alpha-1\}}}
 \left| \rme^{(1-\varepsilon)x^2/2} \, g^{(\tau)}(x) \right|
 <
 \infty
 ,\qquad\text{for some $\varepsilon\in(0,1)$}.\label{eq:decay-cond-proptrap}
\end{align}
Then, for any integer $n\ge2$, the error for the $n$-point trapezoidal rule $Q^*_{n,T}$ as in \eqref{eq:def-trap} with the cut-off interval $[-T,T]$ given by
\begin{align}
  T
  &=
  \sqrt{\frac{2}{(1-\varepsilon)}\alpha \ln(n)}
  ,\label{eq:def-T}
\end{align}
can be bounded by
\begin{align}\label{eq:tr-bound}
  \left|
    \int_{\R} g(x) \rd x
    -
Q_{n,T}^*(g)
  \right|
  \le
  C \, \left(\|g\|_{\alpha}^* + \|g\|_{\alpha,\mathrm{decay}}\right) \,
  \frac{(\ln n)^{ (\alpha/2 + 1/4) \, }}{n^\alpha}
  ,
\end{align}
where the constant $C$ is independent of $n$ and $g$ but depends on $\alpha$ and $\varepsilon$.
\end{proposition}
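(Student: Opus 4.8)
The plan is to feed the two-term splitting \eqref{eq:error-decomp-g} into the interior estimate of Lemma~\ref{lem:trap-error-inside} and then balance the two error sources through the prescribed cut-off $T$. The first term of \eqref{eq:error-decomp-g} is the truncation (tail) error, which I would control directly from the decay hypothesis \eqref{eq:decay-cond-proptrap}: taking $\tau=0$ gives $|g(x)|\le \|g\|_{\alpha,\mathrm{decay}}\,\rme^{-(1-\varepsilon)x^2/2}$, and hence, with the elementary Gaussian tail estimate $\int_T^\infty \rme^{-ax^2/2}\rd x\le (aT)^{-1}\rme^{-aT^2/2}$ applied with $a=1-\varepsilon$,
\begin{align*}
\biggl|\int_{\R} g(x)\rd x - \int_{-T}^T g(x)\rd x\biggr|
&\le \|g\|_{\alpha,\mathrm{decay}}\int_{|x|\ge T}\rme^{-(1-\varepsilon)x^2/2}\rd x\\
&\le \frac{2\,\|g\|_{\alpha,\mathrm{decay}}}{(1-\varepsilon)T}\,\rme^{-(1-\varepsilon)T^2/2}.
\end{align*}

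Second, I would bound the interior term $\bigl|\int_{-T}^T g - Q^*_{n,T}(g)\bigr|$ by invoking Lemma~\ref{lem:trap-error-inside}, noting that $\|g\|_{\alpha,[-T,T]}\le \|g\|_{\alpha}^*$ directly from the definition \eqref{eq:def-alpha-star}. This produces an \emph{aliasing} contribution $C_\alpha\|g\|_{\alpha}^* T^{\alpha+1/2}n^{-\alpha}$ together with a \emph{boundary} contribution $\alpha\max\{1,(2T)^{\alpha-1}\}\|g\|_{\alpha,\mathrm{decay}}\,\rme^{-(1-\varepsilon)T^2/2}$, coming from the Bernoulli-polynomial correction.

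The crux is the choice \eqref{eq:def-T}, which is engineered precisely so that $\rme^{-(1-\varepsilon)T^2/2}=n^{-\alpha}$. With this identity the truncation and boundary contributions both acquire the factor $n^{-\alpha}$, matching the aliasing term, so that the whole analysis reduces to tracking powers of $\ln n$. Since $T=\bigl(\tfrac{2\alpha}{1-\varepsilon}\bigr)^{1/2}(\ln n)^{1/2}$, the aliasing term scales like $(\ln n)^{\alpha/2+1/4}$, the truncation term like $(\ln n)^{-1/2}$, and the boundary term like $(\ln n)^{(\alpha-1)/2}$; the first dominates and yields the stated exponent $\alpha/2+1/4$. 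I expect the only genuinely delicate point to be bookkeeping rather than analysis, namely verifying that the two subdominant powers of $\ln n$ are uniformly controlled by $(\ln n)^{\alpha/2+1/4}$ for all $n\ge 2$. This needs mild care at $n=2$, where $\ln n<1$ reverses the ordering of the powers; since this affects only finitely many $n$, it is absorbed into the constant $C$. Collecting the three bounds, together with $1/T$ bounded for $n\ge 2$, then delivers \eqref{eq:tr-bound}.
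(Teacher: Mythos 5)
Your proposal is correct and follows essentially the same route as the paper's proof: the same decomposition \eqref{eq:error-decomp-g}, the same elementary Gaussian tail bound for the truncation error, and the same invocation of Lemma~\ref{lem:trap-error-inside} with the cut-off \eqref{eq:def-T} chosen so that $\rme^{-(1-\varepsilon)T^2/2}=n^{-\alpha}$, followed by identical bookkeeping of the three powers $(\ln n)^{-1/2}$, $(\ln n)^{\alpha/2+1/4}$, and $(\ln n)^{\alpha/2-1/2}$. Your explicit remark about $n=2$ (where $\ln n<1$ reverses the ordering of these powers) addresses a point the paper absorbs silently into the constant $C$.
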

\begin{proof}

Consider the bound \eqref{eq:error-decomp-g}.
The error due to cutting off the integration domain is bounded by 
\begin{align*}
\left|\int_{\R} g(x) \rd x -\int_{-T}^T g(x) \rd x  \right| 
&\le
 2\|g\|_{\alpha,\mathrm{decay}} \int_T^\infty \rme^{-(1-\varepsilon)x^2/2} \rd x
 \\
 &\le
 \frac{2\|g\|_{\alpha,\mathrm{decay}}}{(1-\varepsilon)T}\int_T^\infty (1-\varepsilon) x \rme^{-(1-\varepsilon)x^2/2} \rd x
 \\&=
 \frac{2\|g\|_{\alpha,\mathrm{decay}}}{(1-\varepsilon)T} \rme^{-(1-\varepsilon)T^2/2}
=
 \frac{\sqrt{2}\|g\|_{\alpha,\mathrm{decay}}}{\sqrt{\alpha(1-\varepsilon)}}n^{-\alpha} (\ln(n))^{-1/2}.
\end{align*}
Noting that $n\ge2$ and \eqref{eq:def-T} imply $2T>1$ so that $\max\{1,(2T)^{\alpha-1}\}=(2T)^{\alpha-1}$,  from Lemma~\ref{lem:trap-error-inside} we have
\begin{align*}
\left|
   \int_{\R} g(x) \rd x
    -
   Q_{n,T}^*(g)
\right|
  &\le
    \frac{\sqrt{2}\|g\|_{\alpha,\mathrm{decay}}}{\sqrt{\alpha(1-\varepsilon)}} (\ln(n))^{-1/2}\, n^{-\alpha}
  \\
  &\qquad\quad+
  C_1 \, \|g\|_{\alpha}^*
    \,
    (\ln(n))^{(\alpha/2+1/4)} \,
    n^{-\alpha}
  \\
  &\qquad\quad+
  C_2
    \,
    \|g\|_{\alpha,\mathrm{decay}}
     \, (\ln(n))^{(\alpha/2-1/2) }
    \,
    n^{-\alpha}\\
    &\le
     C \, \left(\|g\|_{\alpha}^* + \|g\|_{\alpha,\mathrm{decay}}\right) \,
  (\ln n)^{ (\alpha/2 + 1/4)} \, n^{-\alpha},
\end{align*}
where the constants $C_1$, $C_2$ and $C$ are independent of $n$ and $g$ but depend on $\alpha$ and $\varepsilon$. Thus the claim is proved.
\end{proof}
\begin{remark}
The result \cite[Theorem~2]{NS2021} by Nuyens and Suzuki obtained for a class of quasi-Monte Carlo methods called good lattice rules can be seen as a multidimensional counterpart of Proposition~\ref{prop:trape}.
Indeed, it can be checked that the trapezoidal rule is a good lattice, and thus under the same assumption as Proposition~\ref{prop:trape}, the result therein is immediately applicable to the trapezoidal rule. However, we obtained a better bound by exploiting our one-dimensional setting in Proposition~\ref{prop:trape}. 
Compare this result with \cite[Theorem~2]{NS2021} with the parameters therein being $d=1$, $\beta=(1-\varepsilon)/2$ and $p=q=2$ to see the improvement.
\end{remark}

Our results offer several insights to interpret results available in  the literature.

\begin{sloppypar}
In the context of spectral methods, Boyd \cite{Boyd.JP_2001_book,Boyd.JP_2009} pointed out that the Gauss--Hermite points are distributed roughly uniformly over the interval $[O(-n^{1/2}),O(n^{1/2})]$, and thus, the total number of point being $n$, the spacing between adjacent points decreases only as $O(n^{-1/2})$; see for example \cite[Chapter~17]{Boyd.JP_2001_book} and \cite[Fig.~6]{Boyd.JP_2009}.
The proof of Theorem~\ref{thm:GH-LB} (see also Corollary~\ref{cor:delta-LB}) shows that it is this  slow decrease of the spacing that causes the sub-optimal convergence rate.
\end{sloppypar}

In \cite[Section 5]{T2021}, Trefethen compared Gausss--Hermite quadrature and various quadrature formulas, including the trapezoidal rule. 
Although the focus there was analytic integrands, 
the author also discusses the nonanalytic case. 
On page 142, he seems to have reasoned that 
\footnote{``The ratio increases to nearly order $n^{1/2}$ for nonanalytic functions $f$, where intervals growing just logarithmically rather than algebraically with $n$ are appropriate for balancing domain-truncation and discretization errors.'' \cite[p.\ 142]{T2021}.}
for the nonanalytic functions on $\mathbb{R}$ decaying at a suitable rate (presumably at the rate  $\exp(-x^2)$ as $x\to\infty$ including derivatives)  
the right choice of the cut-off of the domain that 
balances domain-truncation and quadrature errors
should be logarithmic in $n$,
while the Gauss--Hermite rule distributes quadrature points to unnecessarily wide intervals  $[\mathcal{O}(-n^{1/2})), \mathcal{O}(n^{1/2}))]$. 
Proposition~\ref{prop:trape} supports this point for the trapezoidal rule. 
Indeed, 
under the exponential decay condition \eqref{eq:decay-cond-proptrap}, 
we cut off the integration domain logarithmically \eqref{eq:tr-bound}, and we achieve the optimal rate  $\mathcal{O}(n^{-\alpha})$ up to a logarithmic factor. 
Note, however, that for polynomially decaying finitely smooth functions, we expect the right choice of the domain cut-off to grow algebraically; see \cite[Theorem 2 (ii)]{NS2021} for a related result.

In Proposition~\ref{prop:trape}, the choice of the cut-off interval \eqref{eq:def-T} requires the smoothness parameter $\alpha$, which might not be known in practice. 
Replacing $\alpha$ in \eqref{eq:def-T} with any slowly increasing function $\gamma(n)$, such as $\max\{\ln(\ln(n)),0\}$, yields a less tight bound for the trapezoidal rule, but with an $\alpha$-free construction, still achieving the optimal rate up to a factor of $(\gamma(n)\ln n)^{ (\alpha/2 + 1/4)}$.
\begin{corollary}\label{cor:alpha-free}
Suppose that assumptions in Proposition~\ref{prop:trape} are satisfied. Let $\gamma(n)\colon\mathbb{N}\to[0,\infty)$
be a non-decreasing function satisfying  $\lim_{n\to\infty}\gamma(n)=\infty$.
Then, for any integer $n\geq\gamma^{-1}(\alpha):=\min\{m\in\mathbb{N}\mid\gamma(m)\geq\alpha\}$,
the error for $Q_{n,\tilde{T}}^{*}$ as in~\eqref{eq:def-trap} with 
\[
\tilde{T}=\sqrt{\frac{2}{(1-\varepsilon)}\gamma(n)\ln(n)}
\]
can be bounded by 
\begin{align}\left|
    \int_{\R} g(x) \rd x
    -
Q_{n,\tilde{T}}^*(g)
  \right|
  \le
  C \, \left(\|g\|_{\alpha}^* + \|g\|_{\alpha,\mathrm{decay}}\right) \,
  \frac{(\gamma(n)\ln n)^{ (\alpha/2 + 1/4) \, }}{n^\alpha}
,
\end{align}
where the constant $C$ is independent of $n$ and $g$ but depends on $\alpha$ and $\varepsilon$.
\end{corollary}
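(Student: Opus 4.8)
The plan is to re-run the proof of Proposition~\ref{prop:trape} with essentially the same computation, the only change being that the cut-off $T$ of \eqref{eq:def-T} is replaced by the larger quantity $\tilde{T}$. First I would extract the one consequence of the hypothesis $n\ge\gamma^{-1}(\alpha)$ that makes everything work: by the definition of $\gamma^{-1}$ and the monotonicity of $\gamma$, we have $\gamma(n)\ge\gamma(\gamma^{-1}(\alpha))\ge\alpha$. Hence
\[
\tilde{T}=\sqrt{\tfrac{2}{1-\varepsilon}\gamma(n)\ln n}\ \ge\ \sqrt{\tfrac{2}{1-\varepsilon}\alpha\ln n}\ >\ \tfrac12
\quad\text{for }n\ge2,
\]
so in particular $2\tilde{T}>1$ and $\max\{1,(2\tilde{T})^{\alpha-1}\}=(2\tilde{T})^{\alpha-1}$. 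The key identity is $(1-\varepsilon)\tilde{T}^2/2=\gamma(n)\ln n$, whence $\rme^{-(1-\varepsilon)\tilde{T}^2/2}=n^{-\gamma(n)}$, and $\gamma(n)\ge\alpha$ upgrades this to $n^{-\gamma(n)}\le n^{-\alpha}$. This single inequality is what keeps the domain-truncation tail at the optimal order $n^{-\alpha}$ even though the cut-off is now governed by $\gamma(n)$ rather than $\alpha$.

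Next I would bound the two pieces of the decomposition \eqref{eq:error-decomp-g} with $T=\tilde{T}$. For the truncation piece, the same computation as in the proof of Proposition~\ref{prop:trape} gives
\[
\Bigl|\int_{\R}g-\int_{-\tilde{T}}^{\tilde{T}}g\Bigr|\le \frac{2\|g\|_{\alpha,\mathrm{decay}}}{(1-\varepsilon)\tilde{T}}\,\rme^{-(1-\varepsilon)\tilde{T}^2/2}=\frac{2\|g\|_{\alpha,\mathrm{decay}}}{(1-\varepsilon)\tilde{T}}\,n^{-\gamma(n)}\le C\,\|g\|_{\alpha,\mathrm{decay}}\,n^{-\alpha},
\]
using $\tilde{T}>\tfrac12$ and $n^{-\gamma(n)}\le n^{-\alpha}$; this is already within the claimed bound. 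For the interior piece I would invoke Lemma~\ref{lem:trap-error-inside} (which holds for any fixed $T>0$) with $T=\tilde{T}$. Its first term contributes $C_\alpha\|g\|_{\alpha,[-\tilde{T},\tilde{T}]}\,\tilde{T}^{\alpha+1/2}\,n^{-\alpha}$; bounding $\|g\|_{\alpha,[-\tilde{T},\tilde{T}]}\le\|g\|_\alpha^*$ and substituting $\tilde{T}^{\alpha+1/2}=\bigl(\tfrac{2}{1-\varepsilon}\bigr)^{\alpha/2+1/4}(\gamma(n)\ln n)^{\alpha/2+1/4}$ produces exactly the dominant factor $(\gamma(n)\ln n)^{\alpha/2+1/4}\,n^{-\alpha}$.

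Its second term contributes $\alpha(2\tilde{T})^{\alpha-1}\|g\|_{\alpha,\mathrm{decay}}\,n^{-\gamma(n)}$; since $(2\tilde{T})^{\alpha-1}$ equals a constant times $(\gamma(n)\ln n)^{(\alpha-1)/2}$, using $n^{-\gamma(n)}\le n^{-\alpha}$ bounds this by $C\|g\|_{\alpha,\mathrm{decay}}(\gamma(n)\ln n)^{(\alpha-1)/2}\,n^{-\alpha}$, which is of smaller order than the dominant term because $(\alpha-1)/2\le\alpha/2+1/4$. Summing the three contributions and absorbing the $\alpha$- and $\varepsilon$-dependent constants into a single $C$ then yields the asserted bound. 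I expect the only genuine subtlety to be the bookkeeping at the outset: one must check that $n\ge\gamma^{-1}(\alpha)$ really forces $\gamma(n)\ge\alpha$ (so that the truncation tail $n^{-\gamma(n)}$ is no worse than $n^{-\alpha}$) and that $\tilde{T}$ is large enough to collapse $\max\{1,(2\tilde{T})^{\alpha-1}\}$ into a pure power of $\tilde{T}$; the remaining steps are the same routine manipulations of powers of $\ln n$ already carried out in Proposition~\ref{prop:trape}.
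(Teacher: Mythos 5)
Your proposal is correct and takes exactly the route the paper intends: the paper states Corollary~\ref{cor:alpha-free} without a separate proof, treating it as an immediate modification of the proof of Proposition~\ref{prop:trape}, and your argument---extracting $\gamma(n)\geq\alpha$ from $n\geq\gamma^{-1}(\alpha)$ so that $\rme^{-(1-\varepsilon)\tilde{T}^{2}/2}=n^{-\gamma(n)}\leq n^{-\alpha}$, then re-running the truncation bound and the two terms of Lemma~\ref{lem:trap-error-inside} with $\tilde{T}$ in place of $T$---is precisely that modification. The bookkeeping (collapsing $\max\{1,(2\tilde{T})^{\alpha-1}\}$, identifying $\tilde{T}^{\alpha+1/2}$ with the $(\gamma(n)\ln n)^{\alpha/2+1/4}$ factor, and absorbing the lower-order pieces) all checks out.
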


Now we are going to show that Proposition~\ref{prop:trape} is applicable to the weighted Sobolev space $\Hscr_{\alpha}$.
In view of the optimal rate in \cite[Theorem 1]{DILP2018} for the Hermite space $\cH^{\mathrm{Hermite}}_{\alpha}$ and the characterisation of $\cH^{\mathrm{Hermite}}_{\alpha}$ with $\Hscr_{\alpha}$ discussed in Section~\ref{sec:space}, the resulting rate below establishes the optimality, up to a logarithmic factor, of our trapezoidal rule.
\begin{theorem}\label{thm:trape-opt}
Fix $\varepsilon\in(1/2,1)$ arbitrarily.
For $f\in\Hscr_{\alpha}$ with $\alpha\in\NN$, consider $Q^*_{n,T}(f\rho)$ as in \eqref{eq:def-trap} with
  $T
  =
  \sqrt{\frac{2}{1-\varepsilon}\alpha \ln(n)}$. Then, we have
\begin{align*}
  |
    I(f)
    -
    Q_{n,T}^*(f\rho)
  |
  \le
  C  \|f\|_{\alpha}  
  \frac{(\ln n)^{ (\alpha/2 + 1/4) \, }}{n^\alpha}
\end{align*}
for any integer $n\ge 2$, 
where the constant $C$ is independent of $n$ and $f$ but depends on $\alpha$ and $\varepsilon$.
\end{theorem}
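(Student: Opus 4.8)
The plan is to apply Proposition~\ref{prop:trape} directly to the function $g := f\rho$. Since $I(f) = \int_\R f(x)\rho(x)\,\rd x = \int_\R g(x)\,\rd x$, the quantity $|I(f) - Q^*_{n,T}(f\rho)|$ is exactly the trapezoidal error $|\int_\R g\,\rd x - Q^*_{n,T}(g)|$ on the left of \eqref{eq:tr-bound}, and the prescribed cut-off $T = \sqrt{\frac{2}{1-\varepsilon}\alpha\ln n}$ coincides with \eqref{eq:def-T}. It therefore suffices to verify that $g = f\rho$ meets the hypotheses of Proposition~\ref{prop:trape} with the given $\varepsilon \in (1/2,1)$, and to bound both $\|g\|_\alpha^*$ and $\|g\|_{\alpha,\mathrm{decay}}$ by a constant multiple of $\|f\|_\alpha$.

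To handle the derivatives of $g$, I would use the Leibniz rule together with \eqref{eq:Hermite-rho-deriv} at $k=0$ (so that $\rho^{(m)} = (-1)^m\sqrt{m!}\,H_m\rho$) to write
\[
g^{(\tau)} = \sum_{m=0}^\tau \binom{\tau}{m}(-1)^m\sqrt{m!}\,f^{(\tau-m)} H_m\rho, \qquad \tau = 0,1,\dots,\alpha.
\]
Each summand is $f^{(j)}$ times a polynomial times $\rho$. Since $f\in\Hscr_\alpha$ gives $f^{(\tau)}\in W^{1,2}_{\mathrm{loc}}(\R)$ for $\tau\le\alpha-1$ and $f^{(\alpha)}\in L^2_\rho$, absolute continuity of $g^{(\tau)}$ on compact intervals for $\tau\le\alpha-1$ and $g^{(\alpha)}\in L^2(\R)$ follow readily. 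The $L^2(\R)$-bound on $g^{(\alpha)}$ is straightforward: using $|H_m(x)|^2\rho(x)^2\le C_\alpha\,\rho(x)$ (a polynomial times a Gaussian is bounded), each term contributes $\int_\R |f^{(j)}|^2|H_m|^2\rho^2\,\rd x \le C_\alpha\|f^{(j)}\|_{L^2_\rho}^2\le C_\alpha\|f\|_\alpha^2$.

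The crux is the pointwise decay condition \eqref{eq:decay-cond-proptrap}, which requires controlling $f^{(j)}(x)$ pointwise for $j\le\alpha-1$ from the merely $L^2_\rho$-type information. I would use the one-dimensional Sobolev embedding on unit intervals: for $j\le\alpha-1$, both $f^{(j)},f^{(j+1)}\in L^2_\rho$, so
\[
|f^{(j)}(x)|^2 \le C\int_x^{x+1}\bigl(|f^{(j)}(y)|^2+|f^{(j+1)}(y)|^2\bigr)\,\rd y \le C\,\rho(x+1)^{-1}\|f\|_\alpha^2 \le C'\,\rme^{(x+1)^2/2}\|f\|_\alpha^2,
\]
where the local weight $\rho^{-1}$ is bounded by its value at the right endpoint for $x\ge0$ (and symmetrically for $x<0$). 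This yields $|f^{(j)}(x)|\le C\,\rme^{(x+1)^2/4}\|f\|_\alpha$. Multiplying by $\rme^{(1-\varepsilon)x^2/2}$, by the polynomial factor $|H_m(x)|$, and by $\rho(x)$, the Gaussian exponents combine to $(1/4-\varepsilon/2)x^2+\mathcal{O}(x)$; this stays bounded precisely because $\varepsilon>1/2$, which is where the hypothesis $\varepsilon\in(1/2,1)$ enters. Hence $\|g\|_{\alpha,\mathrm{decay}}\le C\|f\|_\alpha$.

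Finally, the same pointwise bound controls $\|g\|_\alpha^*$ from \eqref{eq:def-alpha-star}: for $\tau=0$ the Cauchy--Schwarz inequality gives $|\int_I f\rho\,\rd x|\le\|f\|_{L^2_\rho}(\int_I\rho\,\rd x)^{1/2}\le\|f\|_{L^2_\rho}$; for $1\le\tau\le\alpha-1$ the fundamental theorem of calculus gives $|\int_I g^{(\tau)}\,\rd x|=|g^{(\tau-1)}(b)-g^{(\tau-1)}(a)|\le 2\sup_x|g^{(\tau-1)}(x)|\le C\|f\|_\alpha$; and the $g^{(\alpha)}$ term was already bounded above, uniformly over all finite intervals. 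Combining $\|g\|_\alpha^*+\|g\|_{\alpha,\mathrm{decay}}\le C\|f\|_\alpha$ with Proposition~\ref{prop:trape} then gives the claim. The main obstacle is the pointwise estimate on $f^{(j)}$ and tracking that the loss of $\rme^{x^2/4}$ incurred by the local embedding (half the growth rate of $\rho^{-1}$) is exactly absorbable when $\varepsilon>1/2$.
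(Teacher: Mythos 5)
Your proposal is correct, and it follows the paper's overall reduction — set $g=f\rho$, invoke Proposition~\ref{prop:trape} with the prescribed $T$, and bound $\|g\|_{\alpha}^*+\|g\|_{\alpha,\mathrm{decay}}\le C\|f\|_\alpha$ via the Leibniz rule and the identity $\rho^{(\ell)}=(-1)^{\ell}\sqrt{\ell!}\,H_{\ell}\rho$ — but your verification of the two norm bounds is genuinely different in technique. For the decay condition, the paper weights first and embeds globally: it sets $h_\tau:=\rho^{\varepsilon-1}g^{(\tau)}$, checks $h_\tau\in W^{1,2}(\R)$ (the hypothesis $\varepsilon>1/2$ entering through $\sup_t H_\ell^2(t)\rho^{2\varepsilon-1}(t)<\infty$), and applies the Sobolev inequality $\|h_\tau\|_\infty\le C\|h_\tau\|_{W^{1,2}(\R)}$. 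You instead embed locally and weight afterwards: the translation-invariant Sobolev embedding on unit intervals gives $|f^{(j)}(x)|\le C\,\rme^{(x+1)^2/4}\|f\|_\alpha$ (paying half the exponent of $\rho^{-1}$), and this loss is absorbed because $1/4-\varepsilon/2<0$ — the same threshold $\varepsilon>1/2$, reached by a different mechanism. Your approach is more elementary in that it never differentiates the weighted function (no need to control $h_\tau'$ in $L^2$), at the cost of a case split at the origin and slightly messier exponent bookkeeping; the paper's is more compact and stays global throughout. Similarly, for $\|g\|_\alpha^*$ the paper bounds $\|g^{(\tau)}\|_{L^1(\R)}$ for all $\tau\le\alpha-1$ by Cauchy--Schwarz against $\|H_\ell\|_{L^2_\rho}$, whereas you use Cauchy--Schwarz only for $\tau=0$ and dispatch $1\le\tau\le\alpha-1$ by the fundamental theorem of calculus together with the pointwise decay bound already established — a nice economy, since it reuses $\|g\|_{\alpha,\mathrm{decay}}$ rather than proving independent $L^1$ estimates. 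Both routes are sound and yield the claimed constant depending only on $\alpha$ and $\varepsilon$.
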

\begin{proof}
Let $g:=f\rho$. In view of Proposition~\ref{prop:trape}, it suffices to show $\|g\|_{\alpha}^* + \|g\|_{\alpha,\mathrm{decay}}\leq C\|f\|_\alpha$ for some constant $C>0$, where $\|g\|_{\alpha}^*$ and $\|g\|_{\alpha,\mathrm{decay}}$ are as in 
 \eqref{eq:def-alpha-star} and \eqref{eq:decay-cond-proptrap} with $\varepsilon\in(1/2,1)$, respectively. 

We first show $\|g\|_{\alpha}^*\leq C \|f\|_{\alpha}$.
We have
\[
(\|g\|^*_\alpha)^2 
\le
\sum_{\tau=0}^{\alpha-1}  \Bigl(\int_\R |g^{(\tau)}(x)| \rd x\Bigr)^2 +\int_\R | g^{(\alpha)}(x) |^2\rd x,
\]
but using \eqref{eq:Hermite-def} and the chain rule, 
for $\tau=0,\ldots,\alpha-1$ we have
\begin{align*}
\|g^{(\tau)}&\|_{L^1(\R)}
\le
\sum_{\ell=0}^\tau \binom{\tau}{\ell} \| f^{(\tau-\ell)}(x)\rho^{(\ell)}(x) \|_{L^1(\R)}
\\
&=
\sum_{\ell=0}^\tau \binom{\tau}{\ell} \left(\int_{\R} \left|\rho(x) f^{(\tau-\ell)}(x)\sqrt{\ell!}(-1)^{\ell} H_{\ell}(x) \right| \rd x\right)
    \\
&\le
    \sum_{\ell=0}^\tau \binom{\tau}{\ell} \sqrt{\ell!} \left(\int_{\R} \left| f^{(\tau-\ell)}(x)\right|^2\rho(x)\rd x\right)^{1/2}\left(\int_{\R} \left|H_{\ell}(x) \right|^2 \rho(x) \rd x\right)^{1/2}
    <\infty,
\end{align*}
while for $\tau=\alpha$ we have
\begin{align*}
    \| g^{(\alpha)}(x) \|_{L^2(\R)}
    &\le
    \sum_{\ell=0}^\alpha \binom{\alpha}{\ell} \| f^{(\alpha-\ell)}(x)\rho^{(\ell)}(x) \|_{L^2(\R)}
    \\
    &=
    \sum_{\ell=0}^\alpha \binom{\alpha}{\ell} \left(\int_{\R} \left|\rho(x) f^{(\alpha-\ell)}(x)\sqrt{\ell!}(-1)^{\ell} H_{\ell}(x) \right|^2 \rd x\right)^{1/2}
    \\
    &\le
    \sum_{\ell=0}^\alpha \binom{\alpha}{\ell} \left( \sup_{t\in\R} \left(H_{\ell}^2(t)\rho(t) \right) \int_{\R} \rho(x) \bigl|f^{(\alpha-\ell)}(x)\bigr|^2\ell!    \rd x\right)^{1/2}
    <\infty.
\end{align*}
Hence, $\|g\|^*_\alpha\leq C \|f\|_{\alpha}$ holds.

To show $\|g\|_{\alpha,\mathrm{decay}}\leq C \|f\|_{\alpha}$, let $h_{\tau}:=\rho^{\varepsilon-1} g^{(\tau)}$ for $\tau=0,\ldots,\alpha-1$. Then, we have
\begin{align*}
    \| h_{\tau} \|_{L^2(\R)}
    &\le
    \sum_{\ell=0}^\tau \binom{\tau}{\ell} \|\rho^{\varepsilon-1}(x) f^{(\tau-\ell)}(x)\rho^{(\ell)}(x) \|_{L^2(\R)}
    \\
    &=
    \sum_{\ell=0}^\tau \binom{\tau}{\ell} \left(\int_{\R} \left|\rho^{\varepsilon}(x) f^{(\tau-\ell)}(x)\sqrt{\ell!}(-1)^{\ell} H_{\ell}(x) \right|^2 \rd x\right)^{1/2}
    \\
    &\le
    \sum_{\ell=0}^\tau \binom{\tau}{\ell} \left( \sup_{t\in\R} \left(H_{\ell}^2(t)\rho^{2\varepsilon-1}(t) \right) \int_{\R} \rho(x) \left|f^{(\tau-\ell)}(x)\right|^2 \ell!   \rd x\right)^{1/2} 
    <\infty
\end{align*}
and 
\begin{align*}
\|&h_{\tau}'(x)\|_{L^2(\R)}
\le
\|(1-\varepsilon)x g^{(\tau)}(x)\rho^{\varepsilon-1} (x) \|_{L^2(\R)} + \|g^{(\tau+1)}(x)\rho^{\varepsilon-1}(x)\|_{L^2(\R)}
\\
&
\le
\sum_{\ell=0}^\tau \binom{\tau}{\ell}\left( \int_{\R} \left|(1-\varepsilon)x\rho^{\varepsilon}(x) f^{(\tau-\ell)}(x)\sqrt{\ell!}(-1)^{\ell} H_{\ell}(x) \right|^2 \rd x\right)^{1/2}
\\
&\quad+
\sum_{\ell=0}^{\tau+1} \binom{\tau+1}{\ell}\left(\int_{\R} \left|\rho^{\varepsilon}(x) f^{({\tau+1}-\ell)}(x)\sqrt{\ell!}(-1)^{\ell} H_{\ell}(x) \right|^2 \rd x\right)^{1/2} \\
&\le
\sum_{\ell=0}^\tau \binom{\tau}{\ell} \left( \sup_{t\in \R}\left|\rho^{2\varepsilon-1}(t)(1-\varepsilon)^2 t^2 H^2_{\ell}(t) \right| \int_{\R} |f^{(\tau-\ell)}(x)|^2\rho(x)\ell! \rd x\right)^{1/2}\\
&\quad+
\sum_{\ell=0}^{\tau+1} \binom{\tau+1}{\ell}\left(\sup_{t\in \R}\left| \rho^{2\varepsilon-1}(t) H_{\ell}^2(t) \right|  \int_{\R} |f^{({\tau+1}-\ell)}(x)|^2 \ell! \rho(x)\rd x\right)^{1/2}<
\infty.
\end{align*}
Thus, from the Sobolev inequality, e.g., \cite[Theorem~8.8]{B2011}, for $\tau=0,\dots,\alpha-1$ we have $\|h_{\tau}\|_\infty\le C\|h_{\tau}\|_{W^{1,2}(\R)}<\infty$. This completes the proof.
\end{proof}
Theorem~\ref{thm:trape-opt} is an application of Proposition~\ref{prop:trape} to $g=f\rho$ with $f\in \Hscr_{\alpha}$.  
Similarly, applying Corollary~\ref{cor:alpha-free} to $g=f\rho$ yields a trapezoidal rule whose construction is independent of $\alpha$ with the optimal convergence rate up to a factor of $(\gamma(n)\ln n)^{ (\alpha/2 + 1/4)}$ for functions in $\Hscr_{\alpha}$. 
Since the argument is straightforward from 
Corollary~\ref{cor:alpha-free} and 
Theorem~\ref{thm:trape-opt}, we omit the details.

\subsection*{Details of Figure~\ref{fig:GH-sub}}
Now we are ready to discuss the details of Figure~\ref{fig:GH-sub} in Section~\ref{sec:intro}. 
The trapezoidal rule used there is $Q^*_{n,T}$ as in \eqref{eq:def-trap} with $T=\sqrt{\frac{2}{1-\varepsilon}\alpha\ln (n)}$ and $\varepsilon=0.51$. 
Here, we chose  $\alpha=p$, since $f(x)=|x|^p$ is in $\Hscr_p$ but not in $\Hscr_{p+1}$.
The number of points $n$ is chosen to be odd for the trapezoidal rule and even for Gauss--Hermite quadrature, so that both quadrature rules do not evaluate at the origin $x=0$ where the integrand is not smooth. 

The rate around $\calO(n^{-p/2-0.5})$ we observe for Gauss--Hermite quadrature is consistent with the matching bounds of the of order $n^{-\alpha/2}=n^{-p/2}$ in the sense of worst-case error, since $f(x)=|x|^p$ is a specific element from $\Hscr_p$; 
the rate around $\calO(n^{-p-0.8})$ we observe for the trapezoidal rule also supports our results,  according to which we expect to see at least $\calO(n^{-p})$ for any function in $\Hscr_p$.

\section{Conclusions}\label{sec:conc}
In this paper, we proved the sub-optimality of Gauss--Hermite quadrature and the optimality of the trapezoidal rule for functions with finite smoothness, in the sense of worst-case error.
The lower bound presented for Gauss--Hermite quadrature is sharp, and the upper bound presented for the trapezoidal rule is also sharp, up to a logarithmic factor.

To establish the lower bound for Gauss--Hermite rule, we constructed a sequence of fooling functions. 
This strategy also demonstrated that what causes this lower bound is the placement of quadrature points, and thus tuning the quadrature weights does not improve the bound. 

A key for showing the optimality of the trapezoidal rule was the auxiliary periodic function in Lemma~\ref{lem:trap-error-inside}. 
The function used there is in fact an orthogonal projection in a suitable sense;  
for details, we refer to \cite{NS2021}. 
Needless to say, upon the domain truncation $[-T,T]$, other quadrature rules on the finite interval, such as Clenshaw--Curtis or Gauss--Legendre quadratures, can also be used. 
For these quadrature rules, analogous upper bounds should be able to be derived, without the necessity of introducing the aforementioned periodic function. 
Since these quadratures are arguably more complicated to use than the trapezoidal rule, and the error analysis should be less involved, we had left them out from the scope of this paper.

Our results suggest that the truncated trapezoidal rule may be also promising for high-dimensional problems. 
One generalisation of the trapezoidal rule to the multidimensional setting is the lattice rule. 
Nuyens and Suzuki \cite{NS2021} studied this method for the integration problem on $\mathbb{R}^d$ with respect to the Lebesgue measure. 
To verify if it works well for the Gaussian measure in a high-dimensional setting is kept for future works.

Another generalisation to high-dimensional settings is by the Smolyak-type algorithms. 
As mentioned in Section~\ref{sec:intro}, this type of methods based on Gauss--Hermite points is widely used. 
In light of the results presented in this paper, especially when the target function is expected to have limited smoothness, the trapezoidal rule may be a better choice.
Investigating these speculations is also kept for future works.

\section*{Acknowledgments}
Part of this work was carried out when Yoshihito Kazashi was working at CSQI, Institute of Mathematics, \'Ecole Polytechnique F\'ed\'erale de Lausanne, Switzerland. We thank Dirk Nuyens and Ken’ichiro Tanaka for their valuable comments.

\bibliographystyle{siamplain}
\bibliography{gh-non-optimal}
\end{document}